\newtheorem{theorem}{Theorem}
\newtheorem{lemma}{Lemma}
\newtheorem{proposition}{Proposition}
\newtheorem{conjecture}{Conjecture}
\theoremstyle{definition}
\newtheorem{example}{Example}
\newtheorem{construction}{Construction}
\newtheorem{question}{Question}
\newcommand{\RN}[1]{
  \textup{\uppercase\expandafter{\romannumeral#1}}%
}
\newcommand{\RowNumber  }{\mathcal{R}}
\newcommand{\ColNumber  }{\mathcal{C}}
\tikzset{mycolor/.style = {line width=1bp,color=#1}}%
\tikzset{myfillcolor/.style = {draw,fill=#1}}%
\newcommand{\tikzmark}[1]{\tikz[overlay,remember picture] \node (#1) {};}
\newcommand{\tikzdrawbox}[3][(0pt,0pt)]{%
    \tikz[overlay,remember picture]{
    \draw[#3]
      ($(left#2)+(-0.3em,0.9em) + #1$) rectangle
      ($(right#2)+(0.3em,-0.4em) - #1$);}
}
\NewDocumentCommand{\highlight}{O{blue!40} m m}{%
\draw[mycolor=#1] (#2.north west)rectangle (#3.south east);
}
\newlength{\symsize}\setlength{\symsize}{20pt}
\newlength{\boardwidth}\setlength{\boardwidth}{10\symsize}
\newcommand{\board}[1]{\setlength{\fboxsep}{0pt}%
\fbox{\parbox{\boardwidth}{\setlength{\baselineskip}{\symsize}#1}}}
\newcommand{\row}[1]{\parbox[c][\symsize]{\symsize}{\hfill{#1}}}
\newcommand{\chessboard}[1]{\begin{tabular}{cc}
\parbox{\symsize}{\setlength{\baselineskip}{\symsize}
\row{} \row{} \row{} \row{} \row{} \row{} \row{} \row{} \row{} \row{} }&\board{#1}\\
\end{tabular}}
\newcommand{\TextOnWhite}[1]{\WhiteEmptySquare\hspace{-\symsize}%
\raisebox{.40\symsize}{\makebox[\symsize][c]{\small #1}}}
\newcommand{\TextOnBlack}[1]{\BlackEmptySquare\hspace{-\symsize}%
\raisebox{.40\symsize}{\makebox[\symsize][c]{\small #1}}}
\begin{document}

\title[Carlsson's rank conjecture and square-zero upper triangular matrices]{Carlsson's rank conjecture and a conjecture on square-zero upper triangular matrices}

\author{BERR\.{I}N \c SENT\" URK and  \" OZG\" UN \" UNL\" U }

\address{Department of Mathematics, Bilkent
University, Ankara, 06800, Turkey.}

\email{ berrin@fen.bilkent.edu.tr  \\
unluo@fen.bilkent.edu.tr }

\thanks{The second author is partially supported by T\"{U}BA-GEB\.{I}P/2013-22}

\subjclass[2010]{55M35 ,  13D22, 13D02}

\keywords{rank conjecture, square-zero matrices, projective variety, Borel orbit}

\begin{abstract}
Let $k$ be an algebraically closed field and $A$ the
polynomial algebra in $r$ variables with coefficients in $k$. In case the characteristic of $k$ is $2$, Carlsson
\cite{Carlsson1986} conjectured that for any $DG$-$A$-module $M$
of dimension $N$ as a free $A$-module, if the homology of $M$ is
nontrivial and finite dimensional as a $k$-vector space, then $2^r\leq N$.
Here we
state a stronger conjecture about varieties of square-zero upper triangular
$N\times N$ matrices with entries in $A$.
Using stratifications of these varieties via Borel orbits, we show that the
stronger conjecture holds when $N < 8$ or $r < 3$ without any restriction on the characteristic of $k$.
As a consequence, we obtain a new proof for many of the known cases of Carlsson's
conjecture
and give new results when $N > 4$ and $r = 2$.
\end{abstract}

\maketitle
\section{Introduction}
\label{section:Introduction}

A well-known conjecture in algebraic topology states
that if $(\mathbb{Z}/p\mathbb{Z})^r$ acts freely and cellularly on a finite
CW-complex homotopy equivalent to ${S^{n_1}\times \ldots \times S^{n_m}}$, then $r$ is
less than or equal to $m$. This conjecture is known to be true in several cases: In
the equidimensional case $n_1=\ldots=n_m=:n$, Carlsson \cite{Carlsson1982},
Browder \cite{Browder}, and Benson-Carlson
\cite{BensonCarlson} proved the conjecture under the assumption that the induced
action on homology is trivial. Without the homology assumption, the
equidimensional conjecture was proved by Conner \cite{Conner} for $m=2$,
Adem-Browder \cite{AdemBrowder} for $p\neq2$ or $n\neq 1,3,7$, and Yal\c{c}{\i}n \cite{EYgroupact} for $p=2$, $n=1$. In the
non-equidimensional case, the
conjecture is proved by Smith \cite{Smith} for $m=1$, Heller \cite{Heller} for
$m=2$, Carlsson \cite{Carlsson2} for $p=2$ and $m=3$ , Refai \cite{Refai1} for $p=2$ and $m=4$, and
Okutan-Yal\c{c}{\i}n \cite{OkutanYal} for products in which the average of the
dimensions
is sufficiently large compared to the differences between them. The general case
$m\geq 5$ is
still open.

Let $G=(\mathbb{Z}/p\mathbb{Z})^r$ and $k$ be an algebraically closed field of
characteristic $p$.
Assume that $G$ acts freely and cellularly
on a finite CW-complex $X$ homotopy equivalent to a  product of $m$ spheres. One can
consider the cellular
chain complex $C_*(X;k)$ as a finite chain complex of free $kG$-modules whose
homology  $H_*(X;k)$ is a $2^m$-dimensional $k$-vector space. Hence,
a stronger and purely algebraic conjecture can be stated as follows: If $C_*$ is a
finite chain complex of free $kG$-modules with nonzero homology then $\dim_k
H_*(C_*)\geq 2^r$. However, Iyengar-Walker in \cite{IyengarWalker}
disproved this algebraic conjecture when $p\neq 2$, but the algebraic version for $p=2$ remains open.

Let $R$ be a graded ring. A pair $(M,\partial)$ is a \emph{differential graded} $R$-module if $M$ is a graded $R$-module and $\partial$ is an $R$-linear endomorphism of $M$ that has degree $-1$ and satisfies $\partial^2=0$. Moreover, a $DG$-$R$-module is \emph{free} if the underlying $R$-module is free.

Let  $A=k[y_1,\ldots,y_r]$ be the polynomial
algebra in $r$ variables, where $k$ is a field and each $y_i$ has degree $-1$.
 Using a functor from the category
of chain complexes of $kG$-modules to the category of differential graded
$A$-modules, Carlsson showed in \cite{Carlssonbeta}, {\cite{Carlsson1986}} that
the above algebraic conjecture is  equivalent to the following conjecture when the characteristic of $k$ is $2$:
\begin{conjecture}\label{carlssonsconjecture}
Let $k$ be an algebraically closed field, $A$ the polynomial
algebra in $r$ variables with coefficients in $k$, and $N$ a positive integer. If
$(M,\partial)$ is a free $DG$-$A$-module of rank $N$ whose homology is nonzero
and finite dimensional as a $k$-vector space, then $N\geq 2^r$.
\end{conjecture}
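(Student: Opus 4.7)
The plan is to convert Conjecture \ref{carlssonsconjecture} into a purely algebro-geometric statement about matrices. After choosing a homogeneous $A$-basis of $M$ ordered by non-increasing internal degree, the differential $\partial$, being $A$-linear of degree $-1$, is represented by an upper-triangular $N \times N$ matrix $T$ with entries in the irrelevant ideal of $A$. The condition $\partial^2 = 0$ becomes $T^2 = 0$, and the hypothesis that $H_*(M)$ is finite-dimensional over $k$ is equivalent to the rank of $T \otimes_A \operatorname{Frac}(A)$ being exactly $N/2$; in particular $N$ must be even. One is thereby led to formulate the stronger conjecture: the variety $X_{r,N}$ of upper-triangular $N \times N$ matrices over $A$ satisfying $T^2=0$ and having $(N/2)$-dimensional cokernel at the generic point of $\operatorname{Spec}(A)$ is empty whenever $N < 2^r$. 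Because the homology hypothesis is used only through this generic-rank condition, proving the stronger statement immediately implies Carlsson's conjecture.

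To attack the stronger conjecture, I would exploit the conjugation action of the Borel subgroup $B \subset GL_N(k)$ (invertible upper-triangular matrices with entries in $k$) on $X_{r,N}$, which preserves both the upper-triangular form and the rank condition. The idea is to stratify $X_{r,N}$ by Borel orbits: each orbit can be parametrized by a combinatorial ``pattern'' that records the positions in which constant entries of $T$ are normalized to be nonzero, and within a fixed pattern the remaining entries become indeterminates in $A$ subject to the quadratic relations imposed by $T^2=0$. One then checks, pattern by pattern, that the resulting affine scheme either contains no matrix whose generic rank attains $N/2$, or can only occur when $N \geq 2^r$. The list of patterns is finite for each $N$, and for small $N$ or small $r$ can be reduced further by symmetry and by the evident relations among the quadratic equations coming from $T^2 = 0$, which is presumably what makes the cases $N < 8$ and $r < 3$ claimed in the abstract tractable by direct enumeration.

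The main obstacle, both technically and conceptually, is the stratum-by-stratum analysis itself: within a pattern the equations $T^2 = 0$ define a possibly nonempty affine subscheme in the space of free entries, and one must prove that every point of this subscheme has the rank of $T$ drop strictly below $N/2$ at the generic point of $\operatorname{Spec}(A)$. This requires controlling the heights of the determinantal ideals generated by $(N/2) \times (N/2)$ minors of $T$ modulo the quadratic identities, and as $N$ grows both the number of patterns and the subtlety of these minor ideals expand rapidly. A proof of the full conjecture would presumably require either a structural result uniform in the Borel stratification or a new obstruction-theoretic invariant, additive over strata, whose vanishing forces the bound $N \geq 2^r$; lacking such a tool, the present approach will necessarily proceed case by case and give only the partial results advertised.
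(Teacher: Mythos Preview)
Your outline captures the broad strategy of the paper---reduce to a statement about square-zero upper-triangular matrices and then stratify by Borel orbits---but there are two genuine gaps in the reduction step that the paper handles and you do not.

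First, it is \emph{not} true that simply ordering a homogeneous basis by nonincreasing degree makes the matrix of $\partial$ strictly upper triangular with entries in the irrelevant ideal. If several basis elements share the same degree, the corresponding diagonal block of the matrix of $\partial$ is an arbitrary square-zero matrix over $k$ in the linear coefficients, and nothing forces it to be upper triangular. The paper first argues (using minimality of $N$) that the image of $\partial$ lies in $(y_1,\dots,y_r)M$; then, on the top-degree block, it writes $\partial = \sum y_i X_i$ with $X_i\in M_m(k)$, observes $X_i^2=0$ and $X_iX_j+X_jX_i=0$, and invokes a simultaneous-triangularization theorem (the commuting case in characteristic $2$, McCoy's theorem otherwise) to put all $X_i$ in upper-triangular form. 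Only after this change of basis, and an induction on rank, is the matrix genuinely strictly upper triangular. You skip this entirely.

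Second, and more seriously, you translate ``$H_*(M)$ finite-dimensional'' into the condition that $T$ has rank $N/2$ at the \emph{generic} point of $\operatorname{Spec} A$. The paper needs, and proves (citing Carlsson's Propositions~8 and~9), the much stronger fact that the evaluation of $T$ at \emph{every} $\gamma\in k^r\setminus\{0\}$ has rank exactly $N/2$. This is what produces an honest morphism $\mathbb{P}^{r-1}_k\to V(d)-L(d)$ rather than merely a rational map, and the entire Borel-orbit argument downstream depends on having a morphism from an irreducible projective variety. With only the generic-rank condition your map could have base points or land in $L(d)$ on a closed subset, and the stratification argument as you describe it would not go through.
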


 When the characteristic of $k$ is $2$, Conjecture \ref{carlssonsconjecture} was proved by Carlsson \cite{Carlsson2} for $r\leq 3$ and Refai \cite{Refai1}
for $N\leq 8$.  Avramov, Buchweitz, and Iyengar in \cite{Avramov}
dealt with regular rings and in particular they proved Conjecture \ref{carlssonsconjecture} for $r\leq 3$ without any restriction on the characteristic of $k$.
See also Proposition  $1.1$ and Corollary $1.2$ in \cite{AlldayPuppe2}, Theorem $5.3$ in \cite{WalkerARC} for results in characteristic not equal to $2$.

 In this paper we consider the conjecture from the viewpoint
of algebraic geometry. We show that Conjecture \ref{carlssonsconjecture}
is implied by the following in Section \ref{secconj1conj2}:

\begin{conjecture}\label{conj1}
Let $k$ be an algebraically closed field, $r$ a positive
integer, and  $N=2n$ an even positive integer. Assume that there exists a
nonconstant morphism $\psi $ from the projective variety $\mathbb{P}^{r-1}_k$ to the
weighted
quasi-projective variety of rank $n$ square-zero upper triangular $N\times N$
matrices $(x_{ij})$ with $\deg (x_{ij})=d_i-d_j+1$ for some
$N$-tuple of nonincreasing integers $(d_1,d_2,\dots ,d_N)$. Then $N\geq 2^r$.
\end{conjecture}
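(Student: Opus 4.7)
The plan is to exploit the action of the Borel subgroup $B$ of invertible upper triangular $N\times N$ matrices on the target variety $V$ of rank-$n$ square-zero upper triangular matrices with polynomial entries. Conjugation by a $B$-element preserves the upper triangular form, the square-zero condition, and the rank, so $B$ acts on $V$. The weight grading coming from the cocharacter $(d_1,\ldots ,d_N)$ is compatible with this action, so $V$ decomposes as a disjoint union $V=\bigsqcup_{\tau}V_{\tau}$ of $B$-orbits, each of which is locally closed and weighted.

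First, I would classify the $B$-orbits. For square-zero upper triangular matrices, these orbits are indexed by combinatorial data—essentially partial involutions on $\{1,\ldots ,N\}$ with exactly $n$ transpositions—and each orbit admits an explicit normal form whose nonzero entries are the ``pivots''. I would tabulate, for each orbit $V_\tau$, its dimension and the locations $(i,j)$ of its pivots, together with the corresponding weights $d_i-d_j+1$.

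Next, given a nonconstant morphism $\psi\colon\mathbb{P}^{r-1}_k\to V$, I would pick the smallest $B$-orbit closure $\overline{V_\tau}$ containing $\mathrm{Im}(\psi)$; since $\mathbb{P}^{r-1}_k$ is irreducible, the image meets $V_\tau$ in a dense open set. After a suitable $B$-change of coordinates applied fiberwise, the pulled back matrix $\psi^{*}(M)$ may be put into the normal form of $V_\tau$, so that its pivots become homogeneous polynomials $f_{ij}\in A=k[y_1,\ldots,y_r]$ of the prescribed weighted degrees, with $f_{ij}$ nowhere vanishing on $\mathbb{P}^{r-1}_k$, and the nonpivot entries determined by the constraint of remaining inside $V_\tau$.

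Finally, for each orbit type arising when $N<8$ or $r<3$ I would argue by case analysis that either (i) the combinatorics of $\tau$ together with the square-zero and rank conditions forces one of the required pivots $f_{ij}$ to vanish, contradicting the nonconstancy of $\psi$; or (ii) one can extract from the entries of $\psi^{*}(M)$ a Koszul-type system of homogeneous polynomials in $A$ whose common zero locus in $\mathbb{P}^{r-1}_k$ would have to be empty, which a standard dimension/cohomology argument shows is impossible unless $r\le \log_{2}N$. The main obstacle will be the bookkeeping: the number of orbit types grows rapidly in $N$, and for each one the degrees $d_i-d_j+1$ of the pivots must be tracked carefully to ensure that the contradiction-producing system is homogeneous and of the right shape. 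This is why an unconditional proof is only within reach for the small ranges $N<8$ and $r<3$.
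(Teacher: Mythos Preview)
Your overall strategy---stratifying the target by Borel orbits, assigning to $\psi$ the unique maximal orbit $V_\tau$ whose closure contains the image, and then running a case analysis on the possible $\tau$---is exactly the paper's approach. However, there is a genuine gap in your step~3.

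You assert that after a fiberwise $B$-change of coordinates the pulled-back matrix can be put into the normal form of $V_\tau$ with the pivot entries $f_{ij}$ \emph{nowhere vanishing} on $\mathbb{P}^{r-1}_k$. This would force the image of $\psi$ to lie entirely inside the single orbit $V_\tau$, but that is precisely what fails: the image of $\psi$ is only \emph{generically} in $V_\tau$ and will typically meet smaller orbits in $\overline{V_\tau}$. There is no global morphism $\mathbb{P}^{r-1}_k\to B_N$ realizing such a trivialization, and even rational $B$-conjugations (which the paper does use, via the operations $R_{i,j}(u)$ and $D_i(v)$) only simplify $\psi$ on an open set and introduce new indeterminates rather than producing nowhere-vanishing pivots. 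In the paper's case analysis for $N<8$, the entries in the pivot positions are ordinary homogeneous polynomials that \emph{do} vanish somewhere; the whole point is to show that at such a vanishing locus the rank of $\psi(\gamma)$ drops below $n$, contradicting $\psi(\gamma)\notin L(d)$. For example, in the simplest case $\sigma=(1,2)$ one just takes a root of the single entry $p_{12}$; in the case $\sigma\le(1,2)(3,4)(5,6)$ one combines common zeros of $p_{12},p_{13}$ with the square-zero relations to force a rank drop. Your plan, by contrast, would have nothing to contradict once the pivots are declared nowhere vanishing.

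So the Borel-orbit framework is right, but the mechanism is not ``pivots cannot vanish, contradiction''; it is ``pivots (and certain minors) must vanish somewhere, and at that point the rank-$n$ condition fails''. The paper also needs, for $r\le 2$, a structural result (its Proposition on $\mathbf{DP}(N)$) characterizing which orbit types can occur as $\sigma_\psi$, together with a perturbation argument to reduce to those types; your outline does not yet contain an analogue of this step.
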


We will give a more precise statement of Conjecture \ref{conj1} in Section
\ref{sec3} after discussing necessary definitions and notation. We propose the
following, which is stronger than Conjecture \ref{conj1}:

\begin{conjecture}\label{conj2}
Let $k$, $r$, $N$, $n$ and $\psi $ be as in Conjecture \ref{conj1}. Assume $1\leq
\RowNumber  ,  \ColNumber  \leq N$ and the value of $x_{ij}$ at every point in the
image of $\psi $ is $0$ whenever $i\geq N-\RowNumber  +1$ or $j\leq \ColNumber  $.
Then $N\geq 2^{r-1}(\RowNumber  +\ColNumber )$.
\end{conjecture}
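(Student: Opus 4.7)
My plan is to prove the conjecture by induction on $r$, combined with a stratification of the variety of rank $n$ square-zero upper triangular $N \times N$ matrices by orbits of the Borel subgroup $B \subset GL_N$ of invertible upper triangular matrices acting by conjugation. The case $r = 1$ is vacuous, since $\mathbb{P}^{0}_{k}$ is a single point and no nonconstant morphism from it exists; the conjecture becomes nontrivial only for $r \geq 2$.

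For the base case $r = 2$, where the goal is the modest bound $N \geq 2(\mathcal{R} + \mathcal{C})$, I would argue directly. Each entry $x_{ij}$ is a homogeneous polynomial in $y_1, y_2$ of degree $d_i - d_j + 1$, and the conditions $U^2 = 0$, $\mathrm{rank}(U) = n$, and the vanishing of the first $\mathcal{C}$ columns and last $\mathcal{R}$ rows together impose a rigid system of polynomial identities. Nonconstancy of $\psi$ forces at least one $x_{ij}$ to be a nonconstant polynomial; propagating this through the square-zero and rank constraints along the echelon structure dictated by the vanishing conditions should pin down enough independent nonzero entries to establish the bound.

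For general $r$ and small $N$, the principal tool is the Borel orbit stratification. The $B$-orbits in the variety of rank $n$ square-zero upper triangular $N \times N$ matrices are finite in number and indexed by partial involutions $\sigma$ on $\{1, \ldots, N\}$ consisting of exactly $n$ arcs $(i, j)$ with $i < j$, ordered by a combinatorial refinement that controls orbit closures. Since $\mathbb{P}^{r-1}_{k}$ is irreducible, $\psi(\mathbb{P}^{r-1}_{k})$ lies in a single closure $\overline{\mathcal{O}_\sigma}$, and the vanishing hypothesis forces that no arc of $\sigma$ has its right endpoint in $\{1, \ldots, \mathcal{C}\}$ or its left endpoint in $\{N - \mathcal{R} + 1, \ldots, N\}$. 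By conjugating $\psi$ with a suitable $B$-valued morphism I would bring it into a canonical form relative to $\sigma$, reducing the problem to analyzing nonconstant morphisms $\mathbb{P}^{r-1}_{k} \to \overline{\mathcal{O}_\sigma}$ subject to the weighted grading $\deg(x_{ij}) = d_i - d_j + 1$. For $N < 8$ the list of possible $\sigma$ is short enough for an exhaustive case-by-case check to dispose of each combinatorial type.

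The main obstacle is the inductive doubling needed to reach the factor $2^{r-1}$: restricting $\psi$ to a generic hyperplane $H \cong \mathbb{P}^{r-2}_{k}$ and applying the induction hypothesis gives only $N \geq 2^{r-2}(\mathcal{R} + \mathcal{C})$, so an extra factor of $2$ must be extracted from the direction transverse to $H$. A natural route is to use the derivative of $\psi$ transverse to $H$ to produce a second square-zero endomorphism on a subquotient of $A^N$; combined with the original, this should yield a new morphism either violating induction or satisfying strengthened vanishing with larger $\mathcal{R} + \mathcal{C}$. Making this doubling work uniformly in $r$ while respecting both the weighted grading and the combinatorial closure order on Borel orbits is, I expect, the real difficulty, and is plausibly what forces the paper to restrict to the small regimes $N < 8$ and $r < 3$, where case analysis can substitute for a uniform doubling argument.
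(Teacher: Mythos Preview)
Your proposal has a genuine structural gap: the inductive framework on $r$ cannot work in the generality you set up, because the conjecture is \emph{false} in general. As noted in the introduction of the paper, the Iyengar--Walker counterexamples to Conjecture~\ref{carlssonsconjecture} (for $r\geq 8$ in characteristic $\neq 2$) propagate via Section~\ref{secconj1conj2} to counterexamples to Conjecture~\ref{conj2}. Any argument that would run by induction on $r$ and produce the full bound $N\geq 2^{r-1}(\mathcal{R}+\mathcal{C})$ for all $r$ must therefore break, and your ``transverse derivative doubling'' step is exactly where it would: there is no uniform mechanism to gain the extra factor of $2$ when passing from $r-1$ to $r$. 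The paper does not attempt induction on $r$ at all; its two theorems ($N<8$ and $r\leq 2$) are proved by entirely separate, non-inductive arguments.

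Two further points where your sketch diverges from the paper. First, the case $r=1$ is \emph{not} vacuous: in the paper's usage, ``nonconstant morphism'' means the lifted polynomial map $\widetilde{\psi}:\mathbb{A}^r\to V_N$ is nonconstant (see conditions (I)--(II) after Conjecture~\ref{conj1afternotations}), and Example~\ref{Ex1} exhibits such morphisms for $r=1$; the paper handles $r=1$ by computing $\mathbf{MP}(1,N)$ explicitly. Second, your $r=2$ sketch (``propagate polynomial identities through the echelon structure'') does not resemble what the paper actually does. The paper's proof of Theorem~\ref{mainthm2} hinges on the subset $\mathbf{DP}(N)\subset\mathbf{RP}(N)$ (those $\sigma$ to which no type~\RN{3} move applies, equivalently every type~\RN{1} move drops orbit dimension by exactly $1$; Proposition~\ref{propDPN}). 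For $\sigma_\psi\in\mathbf{DP}(N)$ one locates an explicit $(\mathcal{R}+\mathcal{C}-n)\times(\mathcal{R}+\mathcal{C}-n)$ identity block $A^{\underline{i},\overline{i}}_{\underline{j},\overline{j}}$ in $P_{\sigma_\psi}$, takes a root of the corresponding minor over $\mathbb{P}^1_k$, and derives a contradiction with the rank lower bound of Lemma~\ref{lemmaA}. For $\sigma_\psi\notin\mathbf{DP}(N)$ the paper runs a recursive perturbation using the operations $R_{i,j}(u)$ and $D_i(v)$ to force the image into a $\mathbf{DP}$-type configuration. Your Borel-orbit stratification idea is the right starting point and matches the paper's setup, but the decisive lemma (Proposition~\ref{propDPN}) and the minor-vanishing contradiction are absent from your plan.
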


\noindent Note that in Conjecture \ref{conj2} we have $2^{r-1}(\RowNumber
+\ColNumber  )\geq 2^r$ because $\RowNumber\geq 1 $ and $\ColNumber\geq 1 $.
The main result of the paper is a proof of Conjecture \ref{conj2} when $N<8$
or $r<3$, see Theorem \ref{mainthm1} and Theorem \ref{mainthm2}. As Conjecture
\ref{conj2} is the strongest conjecture mentioned above, we obtain proofs of all the
conjectures in this introduction under the same conditions, including the
main result of Carlsson in \cite{Carlsson1986}. Also note that for $r=2$, taking $N>
4$
gives novel results not covered in the literature.
 However, when the characteristic of the field $k$ is not $2$, Iyengar-Walker \cite{IyengarWalker} gave a counterexample to Conjecture \ref{carlssonsconjecture} for each $r\geq8$. Hence by Section \ref{secconj1conj2}, we can say that Conjectures \ref{conj1} and \ref{conj2} are also false when $r\geq8$ and
the characteristic of the field $k$ is not $2$.  All these conjectures are still open in case the characteristic of $k$ is $2$. In Section \ref{Examples}, we conclude
with examples and problems.

We thank the referee for giving us extensive feedback, a shorter proof of Theorem \ref{THM1} and encouraging us to extend our results to fields with characteristics other than $2$. We are also grateful to Matthew Gelvin for comments and suggestions.
\section{Some notes on Conjectures \ref{carlssonsconjecture}, \ref{conj1}, and \ref{conj2}}
\label{secconj1conj2}
To show that Conjecture \ref{conj2} is the strongest conjecture in Section \ref{section:Introduction}, it is enough to prove the following theorem.
\begin{theorem}\label{THM1}
  Conjecture \ref{conj1} implies Conjecture \ref{carlssonsconjecture}.
\end{theorem}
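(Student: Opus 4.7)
The plan is, starting from a free DG-$A$-module $(M,\partial)$ of rank $N$ with nonzero finite-dimensional homology, to construct a nonconstant morphism $\psi\colon\mathbb{P}^{r-1}_k\to V$ into the weighted quasi-projective variety $V$ of rank-$n$ square-zero upper triangular $N\times N$ matrices described in Conjecture~\ref{conj1}. Conjecture~\ref{conj1} applied to $\psi$ then yields $N\ge 2^r$, which is the statement of Conjecture~\ref{carlssonsconjecture}.

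First I would choose a homogeneous $A$-basis $e_1,\ldots,e_N$ of $M$ with non-increasing degrees $d_1\ge d_2\ge\cdots\ge d_N$ and write $\partial$ as a matrix $P=(P_{ij})$; each entry $P_{ij}$ is a homogeneous polynomial in $y_1,\ldots,y_r$ whose degree matches the weight $d_i-d_j+1$ required in Conjecture~\ref{conj1} (up to the sign convention $\deg y_\alpha=\pm 1$). Second, I would use a sequence of $A$-linear, degree-preserving changes of basis to put $P$ in upper triangular form. The degree constraints already force $P_{ij}=0$ for $i>j$ when $d_j-d_i\ge 2$; the remaining potentially nonzero subdiagonal entries are either scalars (when $d_j=d_i+1$) or linear forms in the $y_\alpha$ (when $d_j=d_i$), and I would eliminate them using row and column operations between and within degree strata, invoking $P^2=0$ to make the inductive elimination within equal-degree strata close up.

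Third, I would specialize $P$ at every closed point $[a]\in\mathbb{P}^{r-1}_k$ to obtain a square-zero matrix $P(a)\in M_N(k)$. Because $H_*(M,\partial)$ is finite-dimensional over $k$, its support as a graded $A$-module is contained in the irrelevant maximal ideal, so the associated complex of coherent sheaves on $\mathbb{P}^{r-1}_k$ is acyclic; translated back to fibers, this forces $\operatorname{rank}P(a)=N/2$ at every $[a]$. In particular $N=2n$ is even, and $[a]\mapsto [P(a)]$ defines a morphism $\psi\colon\mathbb{P}^{r-1}_k\to V$. Fourth, I would rule out that $\psi$ is constant: a constant $\psi$ would force the columns of $P$ to depend on $[a]$ only up to a global scalar, from which a direct computation shows that $H_*(M,\partial)$ is either zero or of infinite $k$-dimension, contradicting the hypothesis.

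The main obstacle is the second step: producing a genuine upper triangular matrix within the same rank $N$, rather than merely a block upper triangular one adapted to the degree filtration. The constant subdiagonal entries can be removed by elementary operations, but eliminating the linear subdiagonal entries inside a single equal-degree stratum is more delicate, because the relevant changes of basis there are necessarily constant, and in general a linear pencil of matrices is not constant-conjugate to an upper triangular one. This will likely require a careful inductive argument using the square-zero relation $P^2=0$ to trivialize the obstruction, and is the place where the bulk of the technical work sits.
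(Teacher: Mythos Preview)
Your outline is the paper's: upper-triangularize $\partial$, show the resulting matrix has rank $n$ at every point of $\mathbb{P}^{r-1}_k$, and read off $\psi$. Your step~3 is equivalent to the paper's citation of Carlsson's Propositions~8 and~9, and your step~4 is unnecessary once one unpacks the paper's definition of ``nonconstant morphism'' (it only asks that the $\psi_{ij}$ be homogeneous of the prescribed positive degrees and not vanish simultaneously, both of which are automatic after the reduction below). The substantive gap is in step~2.

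For the scalar subdiagonal entries: these cannot be removed by homogeneous $A$-linear row and column operations while keeping the rank fixed, because the only degree-preserving basis changes between distinct degree strata go the wrong way. The paper instead assumes $N$ is minimal among all free DG-$A$-modules satisfying the hypothesis; a nonzero scalar entry then lets one split off an acyclic rank-$2$ summand and lower $N$, contradicting minimality. So under this reduction every entry of $\partial$ lies in $(y_1,\dots,y_r)$ and the scalar entries simply do not occur. For the linear entries inside an equal-degree stratum $L$, you correctly isolate the obstacle but not its resolution. Writing $\partial|_L=\sum_i y_iX_i$ with $X_i\in M_m(k)$, the relation $\partial^2=0$ yields $X_i^2=0$ and $X_iX_j+X_jX_i=0$. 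In characteristic~$2$ the $X_i$ commute and are nilpotent, hence simultaneously triangularizable by a constant conjugation; in characteristic $\ne 2$ the relations force $\bigl(Q(X_1,\dots,X_r)(X_iX_j-X_jX_i)\bigr)^2=0$ for every noncommutative polynomial $Q$, which is exactly McCoy's criterion for simultaneous triangularization. This simultaneous-triangularization step, not an elimination procedure, is the missing idea; one then passes to $M/L$ and inducts on rank.
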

\begin{proof}
 Let $k$, $r$, and $A$ be as in Conjecture \ref{carlssonsconjecture}. Let $(M,\partial)$ be a free $DG$-$A$-module of rank $N$ which satisfies the hypothesis in Conjecture \ref{carlssonsconjecture}. Without loss of generality, we can assume that $N$ is the smallest rank of all such $DG$-$A$-modules.

 Suppose the image of the differential $\partial$ is not contained in $(y_1,\ldots, y_r)M$. Then, there exists a basis $b_1,\ldots, b_N$ of $M$ and
  there are some $i$ and $j$ so that
$\partial(b_i) = c b_j + \sum_{l \ne j} g_l b_l$
for some non-zero $c \in k$ and some $g_l \in A$. Replacing $b_j$ with $\partial(b_i)$ gives a new
basis $b'_1, ..., b'_N$ such that $\partial(b'_i) = b'_j$. Now form the acyclic sub-$DG$-$A$-module
$(E,\partial)$ of $(M,\partial)$ spanned by $b'_i$, $b'_j$. The map $(M,\partial) \rightarrow (M,\partial)/(E,\partial)$ is a surjective
quasi-isomorphism and $M/E$ is free of rank $N-2$. This is a contradiction. Hence, the image of the differential $\partial$ is contained in $(y_1,\ldots, y_r)M$.

 Now pick any basis $c_1, \dots ,c_N$ of $M$
 such that $\deg (c_1) \leq \dots \leq \deg (c_N)$. Let m be such that $\deg (c_{N-m+1}) = \cdots = \deg (c_N)$ and
$\deg (c_i) < \deg (c_{N-m+1})$ for all ${i < N-m+1}$. For each $i$, we have $\partial(c_i) = \sum_j g_{ij}c_j $, for
some homogeneous polynomials $g_{ij}\in A$. Since the image of $\partial$ is contained in $(y_1,\ldots, y_r)M$, no $g_{ij}$ is a non-zero
constant. Thus, whenever $g_{ij}$ is non-zero, we have $\deg (g_{ij}) \leq -1$ and hence
${\deg (c_j) = \deg (c_i) - 1 - \deg (g_{ij}) \geq \deg (c_i)}$.
It follows that the differential on $M$ restricts to one on the submodule
\begin{displaymath}
L = Ac_{N-m+1} \oplus \cdots \oplus  Ac_N.
\end{displaymath}
 More precisely, for all $i\in\{N-m+1,\ldots, N\}$ we have
${\partial(c_i) = \sum_{j=N-m+1}^N g_{ij} c_j }$
where each nonzero $g_{ij}$ is a linear polynomial. Hence, relative to the
basis $c_{N-m+1}, \dots, c_N$, the differential $\partial$ on $L$ is given by a matrix in the form
$y_1 X_1 + \cdots + y_r X_r$
where each $X_i$ is an $m\times m$ matrix with entries in $k$. Since $\partial^2 = 0$ we have
$X_i^2 = 0$   and $X_i X_j + X_j X_i = 0$  for all i,j. In case the characteristic of the field $k$ is $2$, by a classical result
  about commuting set of matrices, for example see Theorem $7$ on page $207$ in \cite{HoffmanKunze}, there exists an invertible  $m \times m$ matrix $T$ with coefficients in $k$ such that $T^{-1}X_iT$ is upper triangular for all
$i\in \{1,\ldots, r\}$. In case the characteristic of the field $k$ is not $2$, for every polynomial $Q$ in noncommutative $r$ variables, the square of the matrix $Q(X_1,\ldots X_r)(X_iX_j-X_jX_i)$ is zero.
 Therefore, by a theorem of McCoy as stated in \cite{Shapiro}(see also \cite{Drazin}, \cite{McCoy}), again there exists a matrix $T$ as above which simultaneously conjugates all $X_i$'s to upper triangular matrices. In other words, there is a $k$-linear change of basis in which each
$X_i$ is upper triangular. It follows that, relative to this new basis $c'_{N-m+1}, \dots, c'_N$ of $L$
one has
$\partial(c'_i) \in \oplus_{j=i+1}^{N}  A c'_j$ for all  $i \in \{ N-m+1, \dots N\}$.
Note that $M/L$ is a free $DG$-$A$-module whose differential has an image in $(y_1,\ldots, y_r)(M/L)$ and so, by inductive arguments on rank, we may assume that $M/L$ admits a basis which makes its differential upper triangular. The union of any lift of this basis to $M$ with
the basis $c'_{N-m+1}, \dots, c'_N$ gives a basis $B$ for $M$ where $\partial$ is represented by an upper triangular matrix $\Psi'$.
 Moreover, Propositions $8$ and $9$
 in \cite{Carlsson2} work for any characteristic. Hence $N$ is divisible
by $2$ and for any $\gamma $ in $k^r-\{0\}$ the evaluation of $\Psi'$ at
$\gamma $ gives a matrix of rank $N/2$.

 Let $S=k[x_1,\dots ,x_r]$ be
the polynomial algebra with $\deg (x_i)=1$. For $1\leq i\leq r$, replace $y_i$ with $x_i$
 in $\Psi '$ to obtain $\Psi $. Note that $\Psi $ can be considered as a nonconstant morphism from the projective variety $\mathbb{P}^{r-1}_k$
to the
weighted quasi-projective variety of rank $N/2$ square-zero upper
triangular $N\times N$
matrices $(x_{ij})$ with $\deg (x_{ij})=d_i-d_j+1$ where $d_i= -($degree
of the $i$th element in $B)$.
\end{proof}

\section{Varieties of square-zero matrices}
\label{sec3}
 We assume that $k$ is an algebraically closed field, $n$ a positive integer, $N=2n$, and $d=(d_1,d_2,\dots ,d_N)$
  an $N$-tuple of nonincreasing integers.
\subsection{Statements of conjectures}
We give here the notation for the affine and projective varieties used
to prove the conjectures discussed above.
First we fix an affine variety $U_N$, a ring $R(U_N)$, and a
subvariety $V_N$ as follows:
\begin{itemize}
\item $U_N$ is the affine variety of strictly upper triangular $N\times N$ matrices
over $k$.
\item $R(U_N)=k[\,\,x_{ij}\,\,|\,\, 1\leq i< j\leq N\,\,]$ is the coordinate ring of
$U_N$.
\item $V_N$ is the subvariety of square zero matrices in $U_N$.
\end{itemize}
Define an action of the unit group $k^{*}$ on $U_N$ by $\lambda \cdot
(x_{ij})=(\lambda^{d_i-d_j+1}x_{ij})$ for $\lambda \in k^{*}$. Using this action we
set two more notation.
\begin{itemize}
\item $U(d)$ is the weighted projective space given by the quotient of $U_N-\{0\}$ by
the action of $k^{*}$.
\item  $R(U(d))$ is the homogeneous coordinate ring of $U(d)$. In other words,
$R(U(d))$ is
 $R(U_N)$ considered as a graded
ring with $\deg(x_{ij})=d_i-d_j+1$.
\end{itemize}
 Note that the polynomial  $\displaystyle p_{ij}=\sum_{m=i+1}^{j-1}x_{im}x_{mj}$ in
$R(U(d))$ is homogeneous of degree $d_i-d_j+2$ whenever $1\leq i< j\leq
N$. Similarly, the
$n\times n$-minors of $(x_{ij})$ are homogeneous polynomials in $R(U(d))$. Hence, we
define two subvarieties of $U(d)$ as follows:
\begin{itemize}
\item $V(d)$ is the projective variety of square zero matrices in $U(d)$.
\item $L(d)$ is the subvariety of matrices of rank less than $n$ in $V(d)$.
\end{itemize}
We can use this terminology to restate Conjecture \ref{conj1}:
\begin{conjecture}\label{conj1afternotations}
Let $k$ be an algebraically closed field, $r$ a positive
integer, and $d$ an $N$-tuple of nonincreasing integers. If there exists a
nonconstant morphism $\psi $ from the projective variety $\mathbb{P}^{r-1}_k$
to the quasi-projective variety $V(d)-L(d)$, then $N\geq 2^r$.
\end{conjecture}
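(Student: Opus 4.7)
Since $\mathbb{P}^{r-1}_k$ is projective and irreducible, $\psi(\mathbb{P}^{r-1}_k)$ is an irreducible projective subvariety of the quasi-projective variety $V(d)-L(d)$. The key tool is the conjugation action of the Borel subgroup $B$ of invertible upper triangular $N\times N$ matrices on $V_N$; this action preserves $L(d)$ because it preserves the rank, and it is compatible with the weighting $k^{*}$-action because conjugation by a diagonal matrix merely rescales each coordinate $x_{ij}$. It is classical that $B$ has only finitely many orbits on $V_N$, parametrized by partial fixed-point-free involutions on $\{1,\ldots,N\}$, with the rank-$n$ stratum indexed by perfect matchings. Hence $V(d)-L(d)$ decomposes as a finite disjoint union of locally closed Borel orbits $\mathcal{O}_\sigma$, and by irreducibility $\psi(\mathbb{P}^{r-1}_k)\subseteq \overline{\mathcal{O}_\sigma}$ for some single matching $\sigma$.

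I would prove the stronger Conjecture \ref{conj2} by induction on $N$, keeping the parameters $\mathcal{R}$ and $\mathcal{C}$ free. After reducing to a single orbit closure as above, the next step is to read off the vanishing pattern of $\overline{\mathcal{O}_\sigma}$: each matching $\sigma$ determines a subset of the strict upper triangle outside of which $x_{ij}$ vanishes identically on $\overline{\mathcal{O}_\sigma}$. The argument should then split into two cases depending on the shape of $\sigma$.

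If $\sigma$ forces an entire bottom row (or left column) of the matrix to vanish beyond the $\mathcal{R}$ rows (and $\mathcal{C}$ columns) already guaranteed by hypothesis, then one can enlarge $\mathcal{R}$ or $\mathcal{C}$ by one, which either contradicts the nonconstancy of $\psi$ directly or feeds a smaller instance of the conjecture. Otherwise, $\sigma$ must be ``dense'' inside the active block; in that case I would exploit a decomposition of $\sigma$ into two smaller matchings on complementary index sets, which induces a block decomposition of the image so that $\psi$ restricts on each block to a nonconstant morphism into a smaller $V(d')-L(d')$. Applying the inductive hypothesis to each block gives two bounds that together sum to the desired $2^{r-1}(\mathcal{R}+\mathcal{C})$. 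The base cases $r=1$ (vacuously true because $\mathbb{P}^0$ admits no nonconstant morphism) and small $N$ (handled by direct inspection of the handful of matchings) start the induction.

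The main obstacle is combinatorial. The number of perfect matchings grows like $(N-1)!!$, and for each non-splitting matching one must find an ad hoc vanishing or dimension argument to obstruct nonconstant morphisms from $\mathbb{P}^{r-1}_k$; moreover, the Bruhat-like closure order among orbits becomes intricate in general. This is why I expect the approach to work cleanly only when $N$ is small (so that there are only a few matchings, each dispatchable by hand) or when $r$ is small (so that the rigidity of morphisms out of $\mathbb{P}^0$ or $\mathbb{P}^1$ forces the image into a low-dimensional, and hence tractable, orbit closure), corresponding precisely to the restrictions $N<8$ and $r<3$ announced in the introduction.
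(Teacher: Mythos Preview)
Your proposal correctly identifies the Borel-orbit stratification and the reduction to a single generic orbit $\sigma_\psi$; this matches the paper. The inductive step, however, has a genuine gap. You claim that a matching $\sigma$ determines a vanishing pattern on $\overline{\mathcal{O}_\sigma}$ and that a decomposition $\sigma=\sigma_1\sigma_2$ on complementary index sets induces a block decomposition of the image, so that $\psi$ splits into two smaller morphisms. Neither holds: the closure $\overline{\mathcal{O}_\sigma}$ is cut out by the rank inequalities $r_{ij}(X)\le r_{ij}(P_\sigma)$, not by vanishing of individual entries, and matrices in the orbit freely mix the two index sets. Already for $N=4$ and $\sigma=(1,2)(3,4)$ the orbit contains matrices with nonzero $x_{13},x_{14},x_{24}$ (conjugate $P_\sigma$ by $I+E_{13}$), so no splitting along $\{1,2\}\sqcup\{3,4\}$ is available; the paper's case~(ii) in Theorem~\ref{mainthm1} handles exactly this $\sigma$ by finding a common zero of $p_{12},p_{13}$ in $\mathbb{P}^2$, not by any block reduction.

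The paper's actual arguments are quite different from your induction. For $N<8$ it does an explicit case analysis over the maximal elements of $\mathbf{RP}(N)$, using the operations $R_{i,j}(u)$ and $D_i(v)$ to manufacture specific minors with a common root. For $r\le2$ it isolates the subset $\mathbf{DP}(N)$ of permutations with no type-\RN{3} move (equivalently, increasing $j$-sequence, via Melnikov's dimension formula), locates a particular square minor $m^{\underline{i},\ldots,\overline{i}}_{\underline{j},\ldots,\overline{j}}$ whose root contradicts the rank bound of Lemma~\ref{lemmaA}, and when $\sigma_\psi\notin\mathbf{DP}(N)$ perturbs $\psi$ through a sequence of rational maps to reduce to that case. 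There is no induction on $N$ and no block decomposition. (A minor point: the paper's ``nonconstant'' refers to the polynomial lift $\widetilde\psi:\mathbb{A}^r\to V_N$, so the $r=1$ case is not vacuous; see Example~\ref{Ex1}.)
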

\noindent Let $U$ be an open subset of $V(d)$. We say $\psi
:\mathbb{P}^{r-1}_k\rightarrow U$ is a nonconstant morphism if  $\psi $ is
represented by $(\psi_{ij})$ so that the following conditions are satisfied:
\begin{enumerate}[(I)]
\item there exists a positive integer $m$ so that each $\psi_{ij}$ is a homogeneous polynomial
in the variables $x_1, x_2, \ldots, x_r$ in $S$ of degree $m(d_i-d_j+1)$ for $1\leq i< j\leq N$,
\item for every $\gamma \in \mathbb{P}^{r-1}_k$ there exists $i,j$ such that
$\psi_{ij}(\gamma)\neq 0$.
\end{enumerate}
In particular, if
$\psi:\mathbb{P}^{r-1}_k\rightarrow U$ is a nonconstant morphism,  $\psi$ can be
considered as a function from
$\mathbb{P}^{r-1}_k$ to $U$ represented by a nonconstant polynomial map
$\widetilde{\psi }$ from $\mathbb{A}^{r}_k$ to the cone over $U$ such that
$\widetilde{\psi }(\mathbb{A}^{r}_k-\{0\})$ does not contain the zero matrix in
$V_N$. Each indeterminate $x_{ij}$ can be
viewed as homogeneous polynomial in $R(U(d))$. Hence for $1\leq \RowNumber
,\ColNumber  \leq N$
we define an important subvariety of $V(d)$:
\begin{itemize}
\item $V(d)_{\RowNumber  \ColNumber  }$ is the subvariety of $V(d)$ given by the
equations $x_{ij}=0$ for $i\geq N-\RowNumber  +1$ or $j\leq \ColNumber  $.
\end{itemize}
\noindent  Now we restate the Conjecture \ref{conj2} as follows:
\begin{conjecture}\label{conj2afternotations}
Let $k$ be an algebraically closed field, $r$ a positive
integer, and $d$ an $N$-tuple of nonincreasing integers. If there exists a
nonconstant morphism $\psi $ from the projective variety $\mathbb{P}^{r-1}_k$ to
 the quasi-projective variety ${V(d)_{\RowNumber  \ColNumber  }}-L(d)$,
  then $N\geq 2^{r-1}(\RowNumber  +\ColNumber  )$.
\end{conjecture}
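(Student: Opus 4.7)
The plan is to exploit the action of the Borel group $B$ of invertible upper triangular matrices on $V(d)$ by conjugation. This action preserves upper-triangularity, rank, and the vanishing conditions defining $V(d)_{\mathcal{R}\mathcal{C}}$, so it stratifies $V(d)_{\mathcal{R}\mathcal{C}} - L(d)$ into finitely many orbits. A square-zero upper triangular $N\times N$ matrix of rank $n = N/2$ has image equal to its kernel (both of dimension $n$), so each orbit can be indexed by combinatorial data encoding the staircase pattern of pivots; I would catalogue, for each admissible orbit type, its dimension, its Zariski closure inside $V(d)$, and the list of coordinate functions $x_{ij}$ that vanish identically on it.

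Given a nonconstant morphism $\psi : \mathbb{P}^{r-1}_k \to V(d)_{\mathcal{R}\mathcal{C}} - L(d)$, its image is an irreducible closed subvariety and therefore lies in the closure of a single $B$-orbit $\mathcal{O}$. I would then attempt induction on $r$. For the base case $r = 1$ the bound $N \geq \mathcal{R} + \mathcal{C}$ is immediate from the rank constraint, since a matrix in $V(d)_{\mathcal{R}\mathcal{C}}$ has rank at most $N - \max(\mathcal{R}, \mathcal{C})$, and rank equal to $N/2$ forces both $\mathcal{R} \leq N/2$ and $\mathcal{C} \leq N/2$. For the inductive step, I would try to locate a distinguished hyperplane in $\mathbb{P}^{r-1}_k$ whose preimage under $\psi$ yields a nonconstant morphism from $\mathbb{P}^{r-2}_k$ into a strictly smaller variety $V(d')_{\mathcal{R}'\mathcal{C}'}$ with $\mathcal{R}' + \mathcal{C}' \geq \mathcal{R} + \mathcal{C}$, and then combine the inductive bound $N' \geq 2^{r-2}(\mathcal{R}' + \mathcal{C}')$ with a doubling mechanism—presumably forced by the coincidence of image and kernel—that recovers $N \geq 2^{r-1}(\mathcal{R} + \mathcal{C})$.

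The principal obstacle is the combinatorial complexity of the orbit stratification: as $N$ and $\mathcal{R} + \mathcal{C}$ grow, the admissible orbit types proliferate, and not every orbit closure obviously admits a clean hyperplane degeneration of the required kind. For small $N$ (say $N < 8$) one can enumerate orbits by hand and check each case directly, while for small $r$ (say $r < 3$, where the source is a point or $\mathbb{P}^1$) linear pencils of square-zero matrices admit a transparent normal-form analysis; I expect these two regimes to yield Theorems \ref{mainthm1} and \ref{mainthm2} respectively. For the general conjecture, the main technical hurdle is identifying a uniform numerical invariant of each orbit—remembering not only its dimension but also how its closure meets the forbidden strip of rows $\geq N - \mathcal{R} + 1$ and columns $\leq \mathcal{C}$—whose behavior under the inductive degeneration step forces the expected doubling.
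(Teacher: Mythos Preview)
Your proposal contains a genuine gap. The inductive step you sketch --- restrict $\psi$ to a hyperplane $H\cong\mathbb{P}^{r-2}_k$, land in some $V(d')_{\mathcal{R}'\mathcal{C}'}$, and then invoke a ``doubling mechanism'' to pass from $N'\geq 2^{r-2}(\mathcal{R}'+\mathcal{C}')$ to $N\geq 2^{r-1}(\mathcal{R}+\mathcal{C})$ --- does not work, and the paper does not attempt it. Restriction to a hyperplane keeps $N$ fixed, so there is no $N'$ distinct from $N$; for the induction to close you would need $\mathcal{R}'+\mathcal{C}'\geq 2(\mathcal{R}+\mathcal{C})$ for some hyperplane, and this simply fails in the basic examples. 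For instance, in the Koszul example (Example with $r=3$, $N=8$, $\mathcal{R}=\mathcal{C}=1$), setting any one variable to zero still leaves $\mathcal{R}'=\mathcal{C}'=1$. The same happens in Example~\ref{Ex2}. More fundamentally, the conjecture is \emph{false} in general: the paper explicitly notes (end of Section~\ref{section:Introduction}) that the Iyengar--Walker counterexamples to Conjecture~\ref{carlssonsconjecture} force Conjecture~\ref{conj2afternotations} to fail for $r\geq 8$ when $\mathrm{char}\,k\neq 2$. So any uniform inductive scheme over all $r$ must break somewhere, and yours breaks at the very first step.

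What the paper actually does is prove only the cases $N<8$ (Theorem~\ref{mainthm1}) and $r\leq 2$ (Theorem~\ref{mainthm2}), and by rather different means than you outline. Your description of the $N<8$ argument (``enumerate orbits by hand'') is correct in spirit: the paper lists the maximal elements of $\mathbf{RP}(N)$ for $N=2,4,6$ and, for each, exhibits explicit minors or polynomial identities among the entries $\psi_{ij}$ that must have a common zero on $\mathbb{P}^{r-1}_k$, forcing a drop in rank. For $r\leq 2$, however, the paper does \emph{not} use a normal-form analysis of pencils. Instead it isolates the subset $\mathbf{DP}(N)\subset\mathbf{RP}(N)$ of permutations admitting no type~\RN{3} move (Proposition~\ref{propDPN}); for $\sigma_\psi\in\mathbf{DP}(N)$ it locates an explicit $(\mathcal{R}+\mathcal{C}-n)\times(\mathcal{R}+\mathcal{C}-n)$ identity block in $P_{\sigma_\psi}$ and shows the corresponding minor of $\psi$ must vanish somewhere on $\mathbb{P}^1_k$, contradicting Lemma~\ref{lemmaA}. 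When $\sigma_\psi\notin\mathbf{DP}(N)$, the paper perturbs $\psi$ by a sequence of the operations $R_{i,j}(u)$, $D_i(v)$ (introducing auxiliary variables) to reduce to the $\mathbf{DP}(N)$ case. Your Borel-orbit framework is the right starting point, but the actual arguments are case-specific and do not assemble into an induction on $r$.
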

  Hence, these varieties are the main interest in this paper.

\subsection{ Action of a Borel subgroup on $V_N$}
Here we introduce an action of a Borel subgroup in the group of invertible $N\times
N$ matrices on the varieties discussed in the previous subsection. First we set a
notation for the Borel subgroup.
\begin{itemize}
\item $B_{N}$ is the group of invertible upper triangular
$N\times N$ matrices with coefficients in $k$.
\end{itemize}
The group $B_{N}$  acts on $V_{N}$ by conjugation.
\begin{itemize}
\item $V_N/B_{N}$ denotes the set of orbits of the action of $B_{N}$ on $V_N$.
\item $B_{X}$ denotes the $B_N$-orbit that contains $X\in V_N$.
\end{itemize}
A \emph{partial permutation matrix} is a matrix having at most one nonzero entry, which is $1$,
in each row and column. A result of
Rothbach (Theorem $1$ in \cite{Rothbach}) implies that each $B_N$-orbit of $V_N$ contains a unique partial permutation
matrix.
Hence we introduce the following notation:
\begin{itemize}
\item $\mathbf{PM}(N)$ denotes the set of nonzero $N\times N$ strictly upper
triangular
square-zero partial permutation matrices.
\end{itemize}
There is a one-to-one correspondence between $\mathbf{PM}(N)$ and $V_N/B_{N}$
sending $P$ to
$B_{P}$. We can identify these partial permutation matrices with a subset of
 the symmetric group $Sym(N)$:
\begin{itemize}
\item $\mathbf{P}(N)$ is the set of involutions in $Sym(N)$; i.e., the set of
non-identity permutations whose square is the identity $()$.
\end{itemize} For $P\in \mathbf{PM}(N)$ and $\sigma \in \mathbf{P}(N)$,
\begin{itemize}
\item $\sigma _P$ denotes the permutation in $\mathbf{P}(N)$ that sends $i$ to $j$
if $P_{ij}=1$;
\item $P _{\sigma }$ denotes the partial permutation matrix in $\mathbf{PM}(N)$ that
satisfies $(P_{\sigma})_{ij}=1$ if and only if $\sigma (i)=j$ and $i<j$.
\end{itemize}
Clearly the assignments $P\mapsto \sigma _P$ and $\sigma \mapsto P _{\sigma }$ are
mutual inverses and so define a one-to-one correspondence between $\mathbf{P}(N)$
and $\mathbf{PM}(N)$.

\subsection{A partial order on the set of orbits}
There are important partial orders on $V_N/B_{N}$, $\mathbf{P}(N)$,
$\mathbf{PM}(N)$, all of which are
equivalent under the one-to-one correspondence mentioned above
(cf. \cite{Rothbach}).
We begin with $V_N/B_{N}$. For Borel orbits $B, B'\in V_N/B_{N}$,
\begin{itemize}
\item  $B'\leq B$ means the closure of $B$, considered as a subspace of $V_N$,
contains $B'$.
\end{itemize}
Second, we define a partial order on $\mathbf{PM}(N)$.
To do this, we consider ranks of certain minors of partial permutation matrices. In
general, for an $N\times N$ matrix $X$,
\begin{itemize}
\item $r_{ij}(X)$ denotes the rank of the lower left $((N-i+1)\times j)$ submatrix
of $X$, where $1\leq i < j\leq N$.
\end{itemize}
For partial permutation matrices $P', P\in \mathbf{PM}(N)$,
\begin{itemize}
\item  $P'\leq P$ means $r_{ij}(P')\leq r_{ij}(P)$ for all $i,j$.
\end{itemize}
Third, we define a partial order on $\mathbf{P}(N)$. For  positive integers
$i<j$, let $\sigma (i,j)$ denote the
product of the permutations $\sigma $ and $(i,j)$ and $\sigma^{(i,j)}$ the conjugate
of $\sigma$ by $(i,j)$. For $\sigma,\sigma ' \in \mathbf{P}(N)$,
\begin{itemize}
\item $\sigma '\leq \sigma$ if $\sigma '$ can be
obtained from $\sigma$ by a sequence of moves of the following form:
\begin{itemize}
\item Type {\RN{1}} replaces $\sigma$ with $\sigma (i,j)$ if $\sigma(i)=j$
and $i\neq j$.
\item Type {\RN{2}} replaces $\sigma$ with $\sigma^{(i,i')}$ if
$\sigma(i)=i<
i' < \sigma(i')$.
\item Type {\RN{3}} replaces $\sigma$ with $\sigma^{(j,j')}$ if
$\sigma(j)<\sigma(j')<j'<j$.
\item Type {\RN{4}} replaces $\sigma$ with $\sigma^{(j,j')}$ if $
\sigma(j')< j'< j=\sigma(j)$.
\item Type {\RN{5}} replaces $\sigma$ with $\sigma^{(i,j)}$ if
$i<\sigma(i)<\sigma(j)<j$.
\end{itemize}
\end{itemize}
The idea of describing order via these moves comes from \cite{Oliver}. Although we use
different names for moves, the set of possible moves are same. We represent a
permutation $(i_1,j_1)(i_2,j_2)\ldots(i_s,j_s)$ in $\mathbf{P}(N)$ by the matrix
\begin{displaymath}
{\left(\begin{array}{cccc}
  i_1& i_2&\ldots&i_s  \\j_1 & j_2&\ldots&j_s
\end{array}\right)}.
\end{displaymath}

For example, we draw the Hasse diagram of $\mathbf{P}(4)$
in which each edge is labelled by the type of the move it represents:

\begin{figure}[H]
    \centering
    \resizebox{0.8\textwidth}{!}{%
\begin{tikzpicture}[scale=.8]
  \node (one) at (-6,8) {$\tiny{\left(\begin{array}{cc}
  1 & 3  \\2 & 4 \end{array}\right)}$};
  \node (two) at (6,8) {$\tiny{\left(\begin{array}{cc}
  1 & 2  \\4 & 3 \end{array}\right)}$};
  \node (three) at (-9,3) {$\tiny{\left(\begin{array}{c}
  3  \\ 4 \end{array}\right)}$};
  \node (four) at (-3,3) {$\tiny{\left(\begin{array}{c}
  2  \\ 3 \end{array}\right)}$};
  \node (five) at (3,3) {$\tiny{\left(\begin{array}{cc}
  1 & 2 \\3 & 4 \end{array}\right)}$};
  \node (six) at (9,3) {$\tiny{\left(\begin{array}{c}
  1  \\ 2 \end{array}\right)}$};
  \node (seven) at (-6,0) {$\tiny{\left(\begin{array}{c}
  2  \\ 4 \end{array}\right)}$};
  \node (eight) at (6,0) {$\tiny{\left(\begin{array}{c}
  1  \\ 3 \end{array}\right)}$};
  \node (nine) at (0,-3) {$\tiny{\left(\begin{array}{c}
  1 \\ 4 \end{array}\right)}$};
  \draw (one) to node {$\begin{array}{cc}
 {\RN{1}} & \\ &  \end{array}$} (three) to node {$\begin{array}{cc}
 & \\ {\RN{2}}&  \end{array}$} (seven) to node {$\begin{array}{cc}
 & \\ {\RN{2}}&  \end{array}$} (nine);
  \draw (one) to node[below] {$\begin{array}{cccc}
    & {\RN{5}} & &\\
  & &       & \\
  & &        &\\
  & &     & \\
  & &     & \\
  & &     & \\
  \end{array}$} (five) to node {$\begin{array}{cc}
 & {\RN{1}}\\ &  \end{array}$} (eight) to node {$\begin{array}{cc}
 & \\ & {\RN{4}} \end{array}$} (nine);
  \draw (two)  to node[above] {$\begin{array}{ccccccc}
&& &&&&\\ &&&&&&\\&&&&&&\\ &&&&&&\\&&&&&&\\&&&&&& \\&&&&&&\\{\RN{1}}&&&&&& \\&&&&&&
\end{array}$} (four)  to node[above] {$\begin{array}{cc}
& \\ &\\& \\ &\\&\\&\\{\RN{4}}& \\   \end{array}$} (seven);
  \draw (four) to node[below] {$\begin{array}{cc}
&{\RN{2}}\\ &    \end{array}$} (eight);
  \draw (five) to node[below] {$\begin{array}{cc}
{\RN{1}}&\\& \\   \end{array}$} (seven);
  \draw (two) to node[above] {$\begin{array}{ccccc}
&&&& \\ &&&&\\&&&& \\ &&&&\\&&&&\\& &&&{\RN{3}}\\   \end{array}$} (five);
  \draw (one) to node[above]
  {$\begin{array}{cccc}
&&&\\ &&&\\&&& \\ && &\\ &&&{\RN{1}}\\&&&\\ \end{array}$}
 (six) to node[below] {$\begin{array}{cc}
&{\RN{4}} \\ &\\& \\   \end{array}$} (eight);
  \end{tikzpicture}
    }%
    \caption{ Hasse diagram of $\mathbf{P}(4)$}
    \label{fig:1}
  \end{figure}
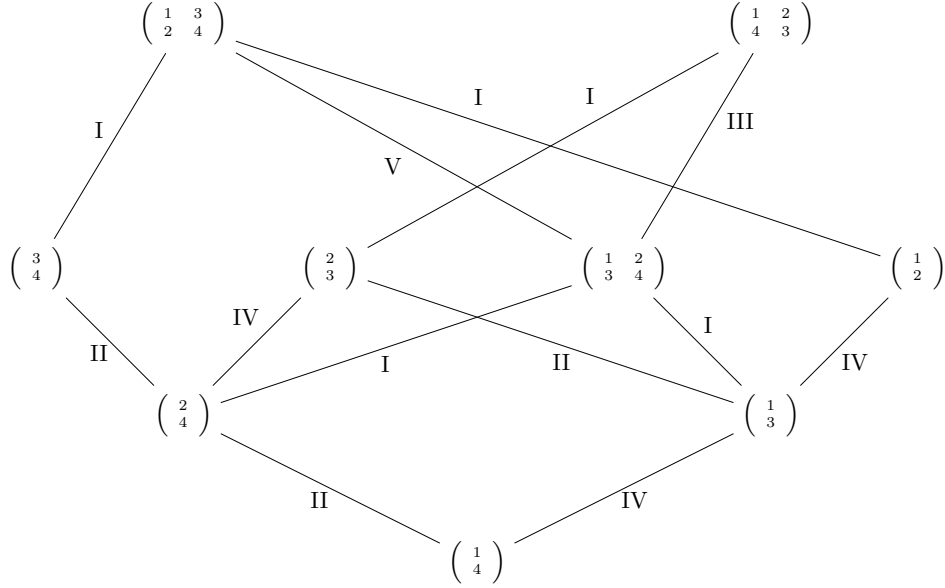
When $N\geq 6$, the Hasse diagram for $\mathbf{P}(N)$ is too large to draw here. We
are actually
only interested in a small part of this diagram, which we discuss in Section
\ref{rankoforbitsandfirstresult}.

One can consider Figure \ref{fig:1} as a stratification of $V_4$. In the next
section, we use the stratification of $V_N$ to stratify $V(d)$.
\subsection{Stratification of $V(d)$}
For $d=(d_1,d_2,\dots ,d_N)$ an $N$-tuple of
nonincreasing integers, $\lambda \in k^{*}$, and
$X=(x_{ij})\in V_{N}$, we have
\begin{displaymath}
\lambda \cdot X =\lambda\cdot (x_{ij})=
(\lambda^{d_i-d_j+1}x_{ij})=
D_{\lambda}I_{\lambda }XD_{\lambda }^{-1}
\end{displaymath}
where $D_{\lambda }$ denotes the diagonal matrix with entries
$\lambda ^{d_1}, \lambda ^{d_2},\dots , \lambda ^{d_N}$ and
$I_{\lambda }$ is the scalar matrix with
all diagonal entries $\lambda $. Let $P_X\in \mathbf{PM}(N)$ be
the unique partial permutation matrix in the Borel orbit of $X$.
 Consider
$b\in B_N$ such that $$P_X=b^{-1}X b.$$
Let $I_{\lambda,X}$ be the diagonal matrix whose
$j\textsuperscript{th}$ entry is $\lambda $ if $(P_X)_{ij}=1$ for some $i$ and
$1$ otherwise. Then we have
$$I_{\lambda }\,P_X \,=\,I_{\lambda,X}^{-1}\,P_X\,I_{\lambda,X}.$$
Hence, we have
$$\lambda \cdot X \,=\,D_{\lambda}\,
b\,I_{\lambda,X}^{-1}\,b^{-1}\,X\,b\,I_{\lambda,X}\,b^{-1}\,D_{\lambda
}^{-1}=Z^{-1}XZ$$
where $Z=b\,I_{\lambda,X}\,b^{-1}\,D_{\lambda }^{-1}$ is in $B_N$. Thus, for any
$X\in V(d)$ there
exists a well-defined Borel orbit in $V_N/B_N$ that contains a representative of
$X$ in $V_N$. Hence we can set the following notation. For $X\in V(d)$,
\begin{itemize}
\item $B_{X}$ denotes the Borel orbit in $V_N/B_N$ that contains a representative
of $X$ in  $V_N$.
\end{itemize}

Let $\psi:\mathbb{P}^{r-1}_k \rightarrow V(d)-L(d)$ be a nonconstant morphism.
 There is a lift of this morphism to a
morphism from $\mathbb{A}^{r}_k-\{0\}$ to the cone over $V(d)-L(d)$ that can
be extended to a morphism $\widetilde{\psi }:\mathbb{A}^{r}_k\rightarrow V_N$.
Since  $\mathbb{A}^{r}_k$ is an irreducible affine variety, there exists a unique
maximal Borel orbit among the Borel orbits that intersects the
image of $\widetilde{\psi } $
nontrivially. Note that this maximal Borel orbit is independent of
the lift and extension we selected because it is also maximal in the set $\{\,
B_X\, |\, X\in V(d) \}$. Hence we may associate a permutation to the nonconstant
morphism $\psi $:
\begin{itemize}
\item $\sigma _{\psi }$ is the permutation that corresponds to the unique maximal
Borel
orbit $B_X$ where $X$ is in the image of $\psi $.
\end{itemize}
Note that every point in the image of a morphism $\psi $ as above must have rank
$n$. Hence $\sigma _{\psi }$ must be a product of $n$ distinct transpositions. In
Section \ref{rankoforbitsandfirstresult}, we will restrict our attention to such
permutations.
\subsection{Operations on polynomial maps from $\mathbb{A}^{r}_k$ to
$V_N$}\label{opsection}
Another way to see that $B_X$ is well-defined for $X\in V(d)$ is to consider the
fact that a minor of a representative of $X$ is zero if and only if the
corresponding minor of another representative is zero.
We use this fact several times to prove our main result.
Hence we introduce the following notation. For
$X\in V_N$,
\begin{itemize}
\item $m^{i_1\,i_2\,\dots \,i_k}_{j_1\, j_2\, \dots \,j_k}(X)$ denotes the
determinant of
the $k\times k$ submatrix obtained by taking the $i_1\textsuperscript{th}$,
$i_2\textsuperscript{th},\dots ,{i_k}\textsuperscript{th}$ rows and
${j_1}\textsuperscript{th},{j_2}\textsuperscript{th}, \dots
,{j_k}\textsuperscript{th}$ columns of $X$.
\end{itemize}
First note that $m^{i_1\,i_2\,\dots \,i_k}_{j_1\, j_2\, \dots \,j_k}$ can be
considered as a morphism from $V_N$ to $k$, and hence can be composed with the
morphism
$\widetilde{\psi }$ mentioned in the previous subsection. Here we discuss several other
morphisms that we can compose with such morphisms.
For $u\in k$,
\begin{itemize}
\item $R_{i,j}(u)$ is the function that takes a square matrix $M$
and multiplies the $i\textsuperscript{th}$ row of
$M$ by $u$ and adds it to the $j\textsuperscript{th}$ row of $M$ while multiplying
the $j\textsuperscript{th}$ column of $M$ by $u$ and adding it to the
$i\textsuperscript{th}$ column of $M$.
\end{itemize}
Note that $R_{i,j}(u)(M)$ is a conjugate of $M$. In fact, they are in the same Borel
orbit when $M\in V_N$ and $i>j$. Hence, for $i>j$, we can consider $R_{i,j}(u)$
as an operation that takes a morphism from $\mathbb{A}^{s}_k$ to $V_N$
and transforms it to a morphism from $\mathbb{A}^{s+1}_k$ to $V_N$ by considering $u$
as a new indeterminate and applying $R_{i,j}(u)$ to the morphism. For $v\in k^{*}$,
\begin{itemize}
\item $D_i(v)$ denotes the function that takes a square matrix $M$ and
multiplies the $i\textsuperscript{th}$ row of $M$ by $v$ and the
$i\textsuperscript{th}$
column of $M$ by $1/v$.
\end{itemize}
Let $q$ be a polynomial in $s$ indeterminates. We define $D_i(q)$ as an
operation that takes a rational map from the quasi-affine variety
$\mathbb{A}^{s}_k-Z$ to $V_N$
and transforms it into a rational map from $\mathbb{A}^{s}_k-Z\cup V(q)$ to $V_N$ by
applying $D_i(q)$, using the following notation:
\begin{itemize}
\item $V(q_1,q_2,\dots ,q_k)$ is the variety determined by
the equations $q_1=q_2=\dots q_k=0$.
\end{itemize}
We use the above notation also for varieties in projective spaces determined by the
homogeneous polynomials $q_1,q_2,\dots ,q_k$.

\subsection{Rank of orbits and proof of first main result}
\label{rankoforbitsandfirstresult}
Each $\sigma \in \mathbf{P}(N)$ is a product of disjoint transpositions. Hence for
${\sigma \in \mathbf{P}(N)}$,
 we define the \emph{rank} of $\sigma$ to be the number of transpositions in $\sigma$.
Note that under the one-to-one corespondence between $\mathbf{P}(N)$ and
$\mathbf{PM}(N)$, the rank of
a permutation is equal to the rank of the corresponding partial permutation matrix.
\begin{itemize}
\item  $\mathbf{RP}(N)$ denotes the permutations in $\mathbf{P}(N)$ of rank
$n$.
\end{itemize}
Note that all moves other than type {\RN{1}} preserve  the rank of $\sigma$. Indeed,
the only way of obtaining $\sigma$ of smaller rank by applying our moves is by
deleting a
transposition, which is the effect of move of type {\RN{1}}. Also note that
it is impossible to have a move of type {\RN{2}} or a move of type {\RN{4}} between
two
permutations in $\mathbf{RP}(N)$. For example, we draw the Hasse diagram for
$\mathbf{RP}(6)$ where each dotted line denotes a move of type \RN{3} and solid line
denotes a move of type \RN{5}:
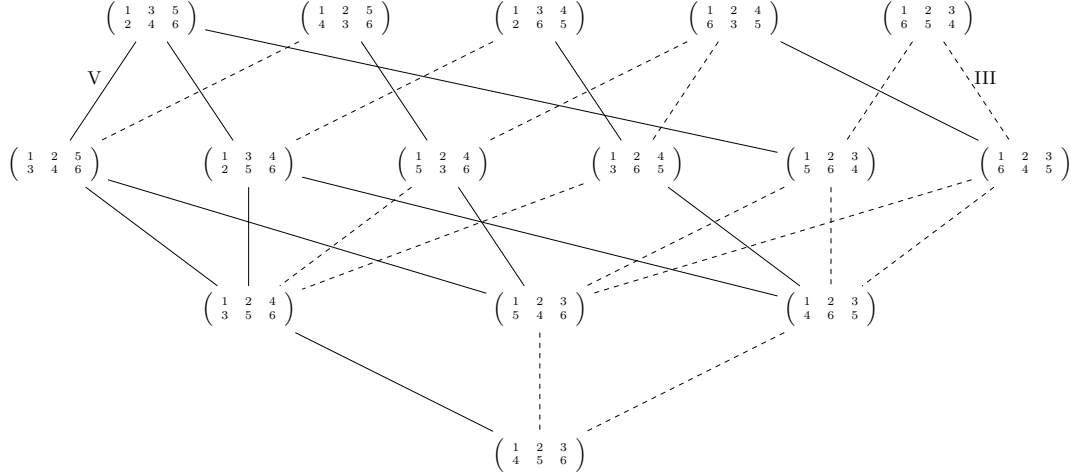
\begin{figure}[H]
    \centering
    \resizebox{0.9\textwidth}{!}{%
\begin{tikzpicture}
  \node (one) at (-8,3) {$\tiny{\left(
\begin{array}{ccc}
  1 & 3 & 5 \\2 & 4 & 6 \end{array}\right)}$};
  \node (two) at (-4,3) {$\tiny{\left(\begin{array}{ccc}
  1 & 2 & 5 \\4 & 3 & 6 \end{array}\right)}$};
  \node (three) at (0,3) {$\tiny{\left(
\begin{array}{ccc}
  1 & 3 & 4 \\ 2 & 6 & 5 \end{array}\right)}$};
  \node (four) at (4,3) {$\tiny{\left(\begin{array}{ccc}
  1 & 2 & 4 \\6 & 3 & 5 \end{array}\right)}$};
  \node (five) at (8,3) {$\tiny{\left(\begin{array}{ccc}
  1 & 2 & 3 \\6 & 5 & 4 \end{array}\right)}$};
  \node (six) at (-10,0) {$\tiny {\left(
\begin{array}{ccc}
  1 & 2 & 5 \\3 & 4 & 6 \end{array}\right)}$};
  \node (seven) at (-6,0) {$\tiny{\left(\begin{array}{ccc}
  1 & 3 & 4 \\2 & 5 & 6 \end{array}\right)}$};
  \node (eight) at (-2,0) {$\tiny {\left(\begin{array}{ccc}
  1 & 2 & 4 \\5 & 3 & 6 \end{array}\right)}$};
  \node (nine) at (2,0) {$ \tiny{\left(\begin{array}{ccc}
  1 & 2 & 4 \\3 & 6 & 5 \end{array}\right)}$};
  \node (ten) at (6,0) {$ \tiny{\left(\begin{array}{ccc}
  1 & 2& 3 \\5 & 6 & 4 \end{array}\right)}$};
  \node (eleven) at (10,0) {$ \tiny {\left(\begin{array}{ccc}
  1 & 2 & 3 \\6 & 4 & 5 \end{array}\right)}$};
  \node (twelve) at (-6,-3) {$\tiny {\left(\begin{array}{ccc}
  1 & 2 & 4 \\3 & 5 & 6 \end{array}\right)}$};
  \node (thirteen) at (0,-3) {$\tiny {\left(\begin{array}{ccc}
  1 & 2 & 3 \\5 & 4 & 6 \end{array}\right)}$};
  \node (fourteen) at (6,-3) {$\tiny {\left(\begin{array}{ccc}
  1 & 2 & 3 \\4 & 6 & 5 \end{array}\right)}$};
  \node (fifteen) at (0,-6) {$\tiny {\left(\begin{array}{ccc}
  1 & 2 & 3 \\4 & 5 & 6 \end{array}\right)}$};
  \draw (fifteen) -- (twelve);
  \draw (one) to node {$\begin{array}{cc}
 {\RN{5}} & \\ &  \end{array}$} (six);
  \draw (twelve) -- (seven) -- (one);
  \draw (one) -- (ten);
  \draw[dashed] (ten) -- (fourteen) -- (fifteen);
  \draw[dashed] (two) -- (six);
  \draw (six) -- (thirteen);
  \draw (six) -- (twelve);
  \draw (two) -- (eight) -- (thirteen);
  \draw[dashed] (three) -- (seven);
  \draw (three) -- (nine) -- (fourteen);
  \draw[dashed] (four) -- (eight) -- (twelve);
  \draw[dashed] (four) -- (nine) -- (twelve);
  \draw[dashed] (five) -- (ten) -- (thirteen) -- (fifteen);
  \draw (four) -- (eleven);
  \draw (seven) -- (fourteen);
  \draw[dashed] (eleven) -- (thirteen);
  \draw[dashed] (five) to node {$\begin{array}{cc} & {\RN{3}}\\ &  \end{array}$}
(eleven) -- (fourteen);
\end{tikzpicture}
 }%
    \caption{Hasse diagram of $\mathbf{RP}(6)$}
    \label{fig:2}
  \end{figure}
Such Hasse diagrams, with particular attention paid to the maximal elements, will lead to the proof of our
first main result.
\begin{theorem} \label{mainthm1}
Conjecture \ref{conj2afternotations} holds for $N<8$.
\end{theorem}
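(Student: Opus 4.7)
The plan is to attach to each nonconstant morphism $\psi: \mathbb{P}^{r-1}_k \to V(d)_{\mathcal{R}\mathcal{C}} - L(d)$ the permutation $\sigma_\psi \in \mathbf{RP}(N)$ coming from the unique maximal Borel orbit met by the image of its lift $\widetilde{\psi}: \mathbb{A}^r_k \to V_N$, and then to proceed by case analysis on $\sigma_\psi$ using the Hasse diagrams of $\mathbf{RP}(N)$ in Figures \ref{fig:1} and \ref{fig:2}. Since $N=2n$ is even, the only relevant values for $N<8$ are $N\in\{2,4,6\}$. The vanishing conditions that define $V(d)_{\mathcal{R}\mathcal{C}}$ pass to $\sigma_\psi$ as a compatibility restriction: each transposition $(i,j)$ appearing in $\sigma_\psi$ must satisfy $i\leq N-\mathcal{R}$ and $j>\mathcal{C}$, drastically shrinking the list of admissible maximal orbits.

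First I would dispose of the base cases $N=2$ (where only the transposition $(1,2)$ is available, immediately forcing $\mathcal{R}=\mathcal{C}=1$ and $r=1$) and $N=4$ (where the admissible rank-$2$ involutions are the elements $(1,2)(3,4)$, $(1,4)(2,3)$, $(1,3)(2,4)$ from Figure \ref{fig:1}, to be narrowed further by the $\mathcal{R}\mathcal{C}$ constraints). The bulk of the work is $N=6$: scan the fifteen elements of $\mathbf{RP}(6)$ in Figure \ref{fig:2}, retain only those compatible with the constraints, and for each admissible $\sigma$ apply the operations $R_{i,j}(u)$ and $D_i(q)$ of Section \ref{opsection} to normalize $\widetilde{\psi}$. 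Using $R_{i,j}(u)$ with $i>j$ at positions below and to the left of a pivot sweeps out those entries while keeping $\widetilde{\psi}$ inside the same Borel orbit, and using $D_i(q)$ with a carefully chosen homogeneous $q$ rescales pivot entries so that the relevant minors $m^{i_1\cdots i_k}_{j_1\cdots j_k}(\widetilde{\psi})$ computed on the normalized form factor in a controlled way.

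The goal is that each normalized case ends in one of three outcomes: either (a) $\widetilde{\psi}$ factors through a proper closed subvariety of $V_N$ isomorphic to the cone over some $V(d')_{\mathcal{R}'\mathcal{C}'}-L(d')$ with strictly smaller $N'$ or strictly larger $\mathcal{R}'+\mathcal{C}'$, allowing induction on the pair $(N,-\mathcal{R}-\mathcal{C})$ to close the argument; or (b) the maximality of $\sigma_\psi$ together with the square-zero relations $p_{ij}(\widetilde{\psi})=0$ forces a coordinate polynomial to vanish identically on the image of $\widetilde{\psi}$, producing a contradiction with rank being exactly $n$ (equivalently, with avoidance of $L(d)$); or (c) a direct dimension count on the open Borel stratum $B_{P_{\sigma_\psi}}\cap V(d)_{\mathcal{R}\mathcal{C}}$ pulls back through $\psi$ to bound $r$ and yields the desired inequality $N\geq 2^{r-1}(\mathcal{R}+\mathcal{C})$ explicitly.

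The hardest part will be the combinatorial bookkeeping in the $N=6$ case: one must track, for each of the five maximal elements of $\mathbf{RP}(6)$ and every allowable pair $(\mathcal{R},\mathcal{C})$, which sequence of $R_{i,j}(u)$ and $D_i(q)$ operations produces a valid reduction, and—crucially—how to choose the polynomial $q$ inside $D_i(q)$ so that removing $V(q)$ does not force the resulting rational map to be defined on a projective subvariety of too low a dimension to retain condition (II) in the definition of a nonconstant morphism. Verifying that the chosen inductive invariant strictly decreases at each step, while the induced data still lies in some $V(d')_{\mathcal{R}'\mathcal{C}'}-L(d')$, is where the bulk of the technical work concentrates.
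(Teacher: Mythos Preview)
Your overall framework---attaching $\sigma_\psi\in\mathbf{RP}(N)$ to $\psi$, reducing to the maximal elements in the Hasse diagrams of Figures~\ref{fig:1} and~\ref{fig:2}, and noting that the $(\mathcal{R},\mathcal{C})$ data is read off from the zero rows and columns of the corresponding partial permutation matrix---matches the paper exactly. The paper observes further that moves of type~\RN{3} do not change the number of leading zero columns or trailing zero rows, so it suffices to treat the orbits lying under each \emph{maximal} element of $\mathbf{RP}(N)$; this yields exactly eight cases (one for $N=2$, two for $N=4$, five for $N=6$).

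Where your proposal departs from the paper, and where it has a genuine gap, is in the mechanism you offer for closing each case. The paper does \emph{not} set up an induction on $(N,-\mathcal{R}-\mathcal{C})$, nor does it invoke a dimension count on the open Borel stratum. Each of the eight cases is instead settled by a direct, case-specific polynomial argument whose goal is always to exhibit a point $\gamma\in\mathbb{P}^{r-1}_k$ at which the rank of $\psi(\gamma)$ drops below $n$ (so $\psi(\gamma)\in L(d)$), contradicting the hypothesis. The techniques vary: for $(1,2)(3,4)$ one intersects two hypersurfaces in $\mathbb{P}^2$; for $(1,2)(3,4)(5,6)$ one splits on whether $p_{12},p_{13}$ are coprime and uses the square-zero relations to propagate vanishing; for $(1,2)(3,6)(4,5)$ one applies $R_{6,5}(u)$ and $D_5$ with a \emph{new} indeterminate $u$, then argues via a regular-sequence obstruction and the identity $p_{45}m^{23}_{56}+p_{25}m^{34}_{56}-p_{35}m^{24}_{56}=0$; for $(1,6)(2,3)(4,5)$ one shows a certain $3\times 3$ determinant built from $\gcd$'s must divide two minors that would otherwise be coprime. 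None of these arguments fit your template~(a) of ``factoring through a smaller $V(d')_{\mathcal{R}'\mathcal{C}'}$'': there is no natural way to delete rows and columns from a square-zero matrix and stay square-zero of the correct rank, so the inductive invariant you propose does not obviously decrease. Template~(c) is also problematic, since a morphism from $\mathbb{P}^{r-1}$ can have image of arbitrarily small dimension, so the dimension of the stratum alone does not bound~$r$.

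In short: your setup is right, but your trichotomy~(a)/(b)/(c) is not how the cases actually close, and options~(a) and~(c) as stated are unlikely to work. What is missing is precisely the ``combinatorial bookkeeping'' you flag as hardest---but it is algebraic rather than combinatorial: for each of the eight maximal $\sigma$, one must find the specific minor or system of polynomials whose common zero on $\mathbb{P}^{r-1}_k$ forces the rank to drop, using coprimality, regular-sequence, or $\gcd$ arguments tailored to the shape of that $\sigma$.
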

\begin{proof}
Take $N<8$, $d=(d_1,d_2,\dots ,d_N)$ an $N$-tuple of nonincreasing integers,
and $\psi: \mathbb{P}^{r-1}_k\rightarrow V(d)-L(d)$ a nonconstant morphism.
 Then $\sigma =\sigma _{\psi }$ is in
$\mathbf{RP}(N)$. By considering Figures \ref{fig:1} and \ref{fig:2}, we note that there exists
a unique maximal $\sigma ' \in \mathbf{RP}(N)$ such that $\sigma $ can be
obtained from $\sigma '$
by a sequence of moves of type \RN{3}.
Since moves of type \RN{3} do not change the number of leading zero rows and ending
zero columns of the corresponding partial permutation matrices,
the Borel orbit corresponding to $\sigma$ is contained in
$V(d)_{\mathcal{R}\mathcal{C}}$
if and only if the Borel orbit corresponding to $\sigma '$ is
 contained in $V(d)_{\mathcal{R}\mathcal{C}}$ for all $\mathcal{R}, \mathcal{C}$.
 Hence it is enough to consider the cases where $\sigma$ is
less than or equal to a maximal element in $\mathbf{RP}(N)$ for $N=2,4,6$.
We cover these cases by proving in the following eight statements: \\
{\bf (i)} If $\sigma =(1,2)$ then $r<2$.  \\
Assume to the contrary that $\sigma =(1,2)$ and $r\geq 2$. If we also write $\psi$ for its restriction to $\mathbb{P}^{1}_k\subseteq \mathbb{P}^{r-1}_k$, we get a map of the form
$$\psi (x:y)=\left[ \begin{array}{cc}
               0 & p_{12} \\
               0 &  0
             \end{array}  \right],
$$
where $p_{12}$ is a homogeneous polynomial in $k[x,y]$. Since $k$ is algebraically
closed,
 there exists $\gamma \in \mathbb{P}^{1}_k$ such that $p_{12}(\gamma)=0$. This means
$\psi(\gamma)$ is in
$L(d)$, which is a contradiction. \\
{\bf (ii) } If $\sigma \leq (1,2)(3,4)$ then $r<3$. \\
Suppose to the contrary that $\sigma \leq (1,2)(3,4)$ and $r\geq3$.
When we restrict $\psi $ to $\mathbb{P}^{2}_k$, we get a map of the form
$$\psi (x:y:z)=\left[ \begin{array}{cccc}
                      0 & p_{12} & p_{13} & p_{14} \\
                      0 & 0 & 0 & p_{24}  \\
                      0 & 0 & 0 & p_{34} \\
                      0 & 0 & 0 & 0
                    \end{array}
 \right].
$$
Note that there exists $\gamma $ in $\mathbb{P}^{2}_k$ such that
\begin{displaymath}
{p_{12}}( \gamma )= 0 \mbox{ and }p_{13}(\gamma)=0.
\end{displaymath}
Again this means $\psi(\gamma)\in L(d)$. Hence this case is proved by
contradiction as well. \\
{\bf (iii)} If $\sigma \leq (1,4)(2,3)$ then $r<2$. \\
Suppose we have
$$\psi (x:y)=\left[ \begin{array}{cccc}
                      0 & 0 & p_{13} & p_{14} \\
                      0 & 0 & p_{23} & p_{24}  \\
                      0 & 0 & 0 & 0 \\
                      0 & 0 & 0 & 0
                    \end{array}
 \right].
$$
Let $m^{i_1\,i_2\,\dots \,i_k}_{j_1\, j_2\, \dots \,j_k}$ be as in Section
\ref{opsection}
and use the same notation to denote its
composition with $\psi$.
Then there exists $\gamma $ in $\mathbb{P}^{1}_k$ such that
\begin{displaymath}
m^{12}_{34}(\gamma )=(p_{13}p_{24}-p_{23}p_{14})(\gamma)=0.
\end{displaymath}
This again gives a contradiction. \\
{\bf (iv)}  If $\sigma \leq (1,2)(3,4)(5,6)$ then $r<3$. \\
Suppose otherwise. We have
$$\psi (x:y:z)=\left[ \begin{array}{cccccc}
                      0 & p_{12}& p_{13} & p_{14} & p_{15} & p_{16}\\
                      0 & 0     & 0      & p_{24} & p_{25} & p_{26}\\
                      0 & 0     & 0      & p_{34} & p_{35} & p_{36}\\
                      0 & 0     & 0      & 0      & 0      & p_{46}\\
                      0 & 0     & 0      & 0      & 0      & p_{56} \\
                      0 & 0     & 0      & 0      & 0      & 0
                    \end{array}
 \right].
$$
If $p_{12}$ and $p_{13}$ are not relatively prime homogeneous polynomials then
there exists $\gamma \in \mathbb{P}^{2}_k$ such that
$$p_{12}(\gamma )=0\mbox{, }p_{13}(\gamma )=0\mbox{, and }  m^{123}_{456}(\gamma) =0.$$
 Moreover, if $p_{46}(\gamma)=0$ and $p_{56}(\gamma)=0$,
then the rank of $\psi(\gamma)$ is at most $2$, which leads to a contradiction. Hence we have
$p_{46}(\gamma)\neq 0$ or $p_{56}(\gamma)\neq 0$. Let
\begin{displaymath}
c_4(\gamma):=
\left[
\begin{array}{c}
p_{14}(\gamma) \\
p_{24}(\gamma) \\
p_{34}(\gamma)
 \end{array}
 \right]
 \mbox{ and }
 c_5(\gamma):=
\left[
\begin{array}{c}
p_{15}(\gamma) \\
p_{25}(\gamma) \\
p_{35}(\gamma)
 \end{array}
 \right].
\end{displaymath}
Since ${\psi}^2=0$,
$c_4(\gamma)p_{46}(\gamma)+c_5(\gamma)p_{56}(\gamma)=0$. By the fact that $p_{46}(\gamma)\neq 0$ or $p_{56}(\gamma)\neq 0$,
 $c_4(\gamma)$ and $c_5(\gamma)$
 are linearly dependent. Thus the rank of $\psi(\gamma)$ is at most $2$, which is a contradiction.\\
 Therefore we may assume $p_{12}$ and $p_{13}$ are
relatively prime. Since $\psi^2=0$, we have
$p_{12}p_{24}+p_{13}p_{34}=0$ and $p_{12}p_{25}+p_{13}p_{35}=0$. This implies that $p_{12}$
divides $p_{34}$ and $p_{35}$, and similarly $p_{13}$ divides $p_{24}$ and $p_{25}$.
Then there exists $\gamma $ in $\mathbb{P}^{2}_k$ such that
$$p_{12}(\gamma)=0 \mbox{ and }p_{13}(\gamma)=0.$$
This means $p_{12}$, $p_{13}$, $p_{24}$, $p_{25}$, $p_{34}$, and $p_{35}$
all vanish at $\gamma $. Hence $\psi(\gamma )\in L(d)$, which is a
contradiction.\\
{\bf (v)}  If $\sigma \leq (1,2)(3,6)(4,5)$ then $r<3$, and
{\bf (vi)} If $\sigma \leq (1,4)(2,3)(5,6)$ then $r<3$. \\
These cases are symmetric, so it is enough to prove {\bf{(v)}}.
Consider
$$\psi (x:y:z)=\left[ \begin{array}{cccccc}
                      0 & p_{12} & p_{13} & p_{14} & p_{15} & p_{16}\\
                      0 & 0 & 0 & 0 & p_{25} & p_{26}\\
                      0 & 0 & 0 & 0 & p_{35} & p_{36}\\
                      0 & 0 & 0 & 0 & p_{45} & p_{46}\\
                      0 & 0 & 0 & 0 & 0 & 0 \\
                      0 & 0 & 0 & 0 & 0 & 0
                    \end{array}
 \right].
$$
We modify $\psi$ by the operations in Section \ref{opsection}. First apply $R_{6,5}(u)$
to $\psi$ for a new variable $u$. If $p_{46}+up_{45}\neq 0$, apply
$D_{5}(1/(p_{46}+up_{45}))$ and then
$R_{5,6}(-p_{45})$ to obtain a matrix of the form
$$\left[ \begin{array}{cccccc}
                      0 & p_{12} & p_{13} & p_{14} & * & * \\
                      0 & 0 & 0 & 0 & m^{24}_{56} & p_{26}+up_{25}\\
                      0 & 0 & 0 & 0 & m^{34}_{56} & p_{36}+up_{35}\\
                      0 & 0 & 0 & 0 & 0 & p_{46}+up_{45}\\
                      0 & 0 & 0 & 0 & 0 & 0 \\
                      0 & 0 & 0 & 0 & 0 & 0
                    \end{array}
                    \right].
$$
Hence by selecting a correct value for $u$ we would be done if
$V(m^{24}_{56},m^{34}_{56})\nsubseteq V(p_{45})$. We may assume
$$V(m^{24}_{56},m^{34}_{56})\subseteq V(p_{45}).$$ Similarly, we may also assume
$$V(m^{23}_{56},m^{34}_{56})\subseteq V(p_{35})
\mbox{ \ and \ }V(m^{23}_{56},m^{24}_{56})\subseteq V(p_{25}).$$ Therefore,
$$V(m^{23}_{56},m^{34}_{56},m^{24}_{56})\subseteq V(p_{25},p_{35},p_{45})=\emptyset.$$
Thus, $\{m^{23}_{56},m^{34}_{56},m^{24}_{56}\}$ is a regular sequence in
$k[x,y,z]$. If $p_{45}$ and $p_{46}$ are not relatively prime, there exists $\gamma $ such that
$m^{23}_{56}(\gamma)=0$, and $p_{45}(\gamma)=p_{46}(\gamma)=0$. Hence,
we may assume $p_{45}$ and $p_{46}$ are relatively prime, which leads a contradiction as we have
$$p_{45}m^{23}_{56}+p_{25}m^{34}_{56}-p_{35}m^{24}_{56}=0.$$
{\bf (vii)} If $\sigma \leq (1,6)(2,3)(4,5)$ then $r<2$. \\
To prove this case, consider
$$\psi (x:y)=\left[ \begin{array}{cccccc}
                      0 & 0 & p_{13} & p_{14} & p_{15} & p_{16}\\
                      0 & 0 & p_{23} & p_{24} & p_{25} & p_{26}\\
                      0 & 0 & 0 & 0 & p_{35} & p_{36}\\
                      0 & 0 & 0 & 0 & p_{45} & p_{46}\\
                      0 & 0 & 0 & 0 & 0 & 0 \\
                      0 & 0 & 0 & 0 & 0 & 0
                    \end{array}
 \right].
$$
Again by applying $R_{i,j}(u)$ and $D_{i}(v)$ for some $u,v$ we may assume that
$$V(m^{124}_{356})\subseteq V(p_{13},p_{23}) \mbox{ \ and \ }
V(m^{123}_{456})\subseteq
V(p_{14},p_{24}).$$ Hence $\{m^{124}_{356},m^{123}_{456}\}$ must be a regular sequence
 in $k[x,y]$. However this is impossible because the determinant of
$$\left[ \begin{array}{ccc}
                     \gcd (p_{13}, p_{14}) & p_{15} & p_{16} \\
                      \gcd (p_{23}, p_{24}) & p_{25} & p_{26} \\
                           0 & \gcd ( p_{35}, p_{45} ) & \gcd (p_{36},p_{46})
                    \end{array}
 \right]
$$
divides both $m^{124}_{356}$ and $m^{123}_{456}$. \\
{\bf (viii)} If $\sigma \leq (1,6)(2,5)(3,4)$ then $r<2$. \\
It is enough to consider a root of $m^{123}_{456}$ to prove this case.
\end{proof}
Note that in the above proof the last two cases prove Conjecture
\ref{conj2afternotations}
when $N\leq 6$ and $r\leq 2$. In the rest of
the paper we will generalize these ideas to prove the
conjecture for $r\leq 2$. To do this we examine the dimensions of these
varieties.
\subsection{Orbit dimensions and proof of second main result}
We now introduce notation for dimensions of these
varieties. In this subsection, for $\sigma \in \mathbf{P}(N)$, if the rank of
$\sigma$ is $s$, then we obtain two sequences of numbers $i_1,\ldots, i_s$ and
$j_1,\ldots, j_s$
 satisfying the following:
$$\sigma=(i_1,j_1)(i_2,j_2)\ldots(i_s,j_s)$$
 with $i_1<i_2<\dots<i_s$ and $i_a< j_a$ for all $1\leq a\leq s$.
In \cite{Melnikov1}, Melnikov gives a formula for the dimension of a
Borel orbit $B_{\sigma}$ for $\sigma $ in $\mathbf{P}(N)$ as follows:
\begin{itemize}
 \item $\displaystyle f_t(\sigma ):=\#\{j_p\mid p< t, j_p< j_t\} \,\,+\,\,
\#\{j_p\mid p<t, j_p< i_t\}$ for $2\leq t\leq s$,
 \item $\displaystyle\dim (B_\sigma)=Ns+\sum_{t=1}^s(i_t-j_t)-\sum_{t=2}^s f_t(\sigma)$.
\end{itemize}
We define a new subset of $\mathbf{RP}(N)$:
\begin{itemize}
\item $\mathbf{DP}(N)$ is the set of all $\sigma $ in $\mathbf{RP}(N)$ such that $\dim
(B_{{\sigma}'})=\dim (B_\sigma)-1$ whenever ${{\sigma}'}$ is a permutation
obtained by applying a single move of type {\RN{1}} to $\sigma$.
\end{itemize}
For instance, the following is the Hasse diagram of $\mathbf{DP}(8)$.
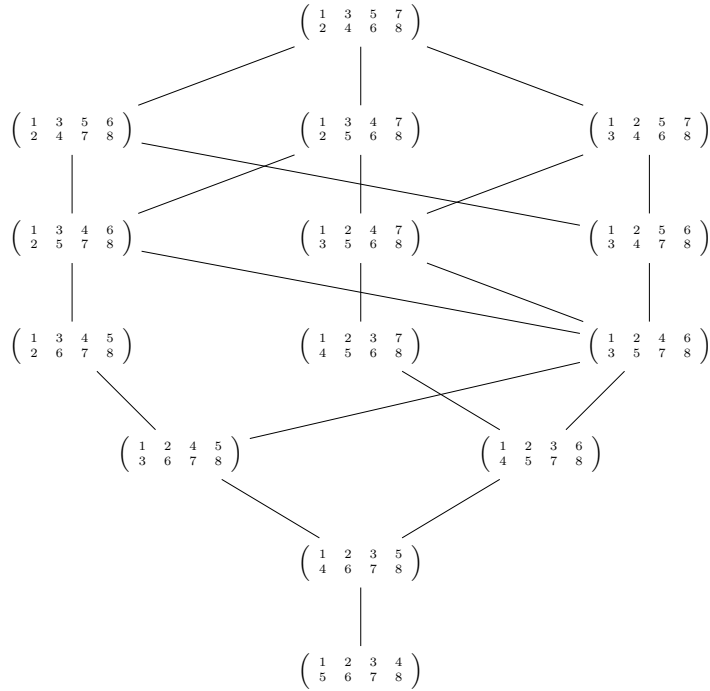
\begin{figure}[H]
    \centering
    \resizebox{0.6\textwidth}{!}{%
\begin{tikzpicture}[scale=.7]
  \node (yuzbes) at (0,6) {$\tiny{\left(
\begin{array}{cccc}
  1 & 3 & 5 & 7 \\2 & 4 & 6 & 8\end{array}\right)}$};
  \node (doksandokuz) at (0,3) {$\tiny{\left(
\begin{array}{cccc}
  1 & 3 & 4 & 7 \\2 & 5 & 6 & 8\end{array}\right)}$};
  \node (yuzuc) at (-8,3) {$\tiny{\left(
\begin{array}{cccc}
  1 & 3 & 5 & 6 \\2 & 4 & 7 & 8\end{array}\right)}$};
  \node (yetmisbes) at (8,3) {$\tiny{\left(
\begin{array}{cccc}
  1 & 2 & 5 & 7 \\3 & 4 & 6 & 8\end{array}\right)}$};
  \node (altmisdokuz) at (0,0) {$\tiny {\left(
\begin{array}{cccc}
  1 & 2 & 4 & 7 \\3 & 5 & 6 & 8\end{array}\right)}$};
  \node (doksanyedi) at (-8,0) {$\tiny{\left(
\begin{array}{cccc}
  1 & 3 & 4 & 6 \\2 & 5 & 7 & 8\end{array}\right)}$};
  \node (yetmisuc) at (8,0) {$\tiny {\left(
\begin{array}{cccc}
  1 & 2 & 5 & 6 \\3 & 4 & 7 & 8\end{array}\right)}$};
  \node (otuzuc) at (0,-3) {$\tiny {\left(\begin{array}{cccc}
  1 & 2 & 3 & 7 \\4 & 5 & 6 & 8\end{array}\right)}$};
  \node (doksanbir) at (-8,-3) {$\tiny {\left(\begin{array}{cccc}
  1 & 3 & 4 & 5 \\2 & 6 & 7 & 8\end{array}\right)}$};
  \node (altmisyedi) at (8,-3) {$\tiny {\left(\begin{array}{cccc}
  1 & 2 & 4 & 6 \\3 & 5 & 7 & 8\end{array}\right)}$};
  \node (altmisbir) at (-5,-6) {$\tiny {\left(\begin{array}{cccc}
  1 & 2 & 4 & 5 \\3 & 6 & 7 & 8\end{array}\right)}$};
  \node (otuzbir) at (5,-6) {$\tiny {\left(\begin{array}{cccc}
  1 & 2 & 3 & 6 \\4 & 5 & 7 & 8\end{array}\right)}$};
  \node (yirmibes) at (0,-9) {$\tiny {\left(\begin{array}{cccc}
  1 & 2 & 3 & 5 \\4 & 6 & 7 & 8\end{array}\right)}$};
  \node (bir) at (0,-12) {$\tiny {\left(\begin{array}{cccc}
  1 & 2 & 3 & 4 \\5 & 6 & 7 & 8\end{array}\right)}$};
  \draw (yuzbes) -- (doksandokuz) -- (doksanyedi) -- (altmisyedi) -- (altmisbir);
  \draw (yuzbes) -- (yuzuc) -- (doksanyedi) -- (doksanbir) -- (altmisbir) --
(yirmibes) -- (bir);
 \draw (yuzbes) -- (yetmisbes) -- (yetmisuc) -- (altmisyedi) -- (otuzbir) --
(yirmibes);
 \draw (yetmisbes) -- (altmisdokuz) -- (altmisyedi);
 \draw (yuzuc) -- (yetmisuc);
 \draw (doksandokuz) -- (altmisdokuz) -- (otuzuc) -- (otuzbir);
\end{tikzpicture}
 }%
    \caption{Hasse diagram of $\mathbf{DP}(8)$}
    \label{fig:3}
  \end{figure}
  Note that in the Hasse diagram of $\mathbf{DP}(8)$ all moves are of type \RN{5}.
This is generally the case, which we will prove below.
Before we do so, we will prove an easier result that will introduce the notation and argument style that will be necessary.

Fix $\sigma \in \mathbf{DP}(N)$. We use the our convention for $\sigma$ at the beginning of this subsection which implies $i_1=1$.

For $q\in \{1,\ldots,n\}$, let ${\sigma}'$ be
 the result of applying the move of type {\RN{1}}
 that deletes the $q$th transposition of $\sigma$, so that
 $${{\sigma}'=(1,j_1)\ldots(i_{q-1},j_{q-1})(i_{q+1},j_{q+1})\ldots (i_n,j_n)}.$$
As a matrix,
\[
   {\sigma}'=
 \left(\begin{array}{cccccccc}
  1    &  i_2 & \ldots &i_{q-1}& \widehat{i_q} &i_{q+1}  & \ldots & i_n \\
  \\
  j_1  &  j_2 & \ldots &j_{q-1}& \widehat{j_q} &j_{q+1}  & \ldots & j_n
     \end{array}\right).
\]

 Then by Melnikov's formula we have
  \begin{displaymath}
\dim (B_{\sigma})=Nn+\sum_{t=1}^n(i_t-j_t)-\sum_{t=2}^n f_t(\sigma)\, , \mbox { and}
\end{displaymath}
\begin{displaymath}
\dim (B_{{\sigma}'})=N(n-1)+\sum_{t=1}^{n-1}(i_t-j_t)-\sum_{t=2}^{n-1} f_t({\sigma}').
\end{displaymath}
 To simplify our calculation, we write $f_t(\sigma)=f_t^1(\sigma)+f_t^2(\sigma)$ for $2\leq t\leq n$,
where
\begin{displaymath}
f_t^1(\sigma)=\#\{j_p\mid p< t, j_p< j_t\} \mbox{ and }
 f_{t}^2(\sigma)=\#\{j_p\mid p< t, j_p< i_t\},
\end{displaymath}
and we use the notation:
\[
f_{t,q}^{l}({\sigma}')=
\left\{
\begin{array}{ll}
  { f_{t}^{l}({\sigma}')} & \mbox{ if } t\leq q-1 \\
   0                    &  \mbox{ if } t= q \\
   f_{t-1}^l({\sigma}') & \mbox{ if } t\geq q+1
\end{array}
\right. \]
for $l=1,2$.
\begin{lemma}\label{lemmaforpropnomove3}
 If $N\neq 2$ and the transposition $(1,\, N)$ appears in $\sigma$, then ${\sigma\notin \mathbf{DP}(N)}$.
\end{lemma}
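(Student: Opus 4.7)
The plan is to prove $\sigma\notin\mathbf{DP}(N)$ by exhibiting a single type \RN{1} move whose dimension drop strictly exceeds $1$. Since the transposition $(1,N)$ appears in $\sigma$ and the sequence $i_1<i_2<\cdots<i_n$ is strictly increasing, necessarily $(i_1,j_1)=(1,N)$. The hypothesis $N\neq 2$, together with $N$ being even, forces $n\geq 2$, so $\sigma$ contains at least one further transposition $(i_q,j_q)$ with $q\geq 2$. I will delete this $q$-th transposition and write $\sigma'$ for the result.

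The key observation driving the argument is that $j_1=N$ is the largest possible value, so for every $t\geq 2$ neither $j_1<j_t$ nor $j_1<i_t$ can hold. Consequently, in Melnikov's formula the index $p=1$ never contributes to any $f_t(\sigma)$ or $f_t(\sigma')$ with $t\geq 2$, and in particular $f_2(\sigma)=f_2(\sigma')=0$. This lets me essentially ``extract'' the $(1,N)$-transposition from both permutations: reindexing $\{2,\ldots,N-1\}$ as $\{1,\ldots,N-2\}$ produces involutions $\tilde{\tau}\in\mathbf{RP}(N-2)$ (built from $(i_2,j_2),\ldots,(i_n,j_n)$) and $\tilde{\tau}_q\in\mathbf{P}(N-2)$ (with the $q$-th pair removed). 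Plugging into Melnikov's formula and collecting the constants gives the two identities
\[
\dim(B_\sigma)=\dim(B_{\tilde{\tau}})+(2n-1),\qquad \dim(B_{\sigma'})=\dim(B_{\tilde{\tau}_q})+(2n-3),
\]
so subtracting shows that the dimension drop along the chosen move equals $[\dim(B_{\tilde{\tau}})-\dim(B_{\tilde{\tau}_q})]+2$.

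To finish, I invoke the standard fact that a type \RN{1} move between Borel orbits strictly decreases dimension, giving $\dim(B_{\tilde{\tau}})-\dim(B_{\tilde{\tau}_q})\geq 1$ and hence a total drop of at least $3>1$. This forces $\sigma\notin\mathbf{DP}(N)$. The main piece of real work is the careful Melnikov-formula bookkeeping needed to verify the two displayed dimension identities; this is where the vanishing of the $p=1$ contributions is used decisively. The ``degenerate'' case $n=2$, in which $\tilde{\tau}_q$ is the empty involution with $\dim B=0$, fits the same pattern once Melnikov's formula is interpreted for zero transpositions, and may alternatively be confirmed by direct computation on $\sigma=(1,4)(2,3)$, where deleting $(2,3)$ is seen to drop the dimension by exactly $3$.
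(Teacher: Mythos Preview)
Your argument is correct and takes a somewhat different route from the paper's. The paper fixes $q=2$, observes that necessarily $i_2=2$, writes $j_2=N-b$ with $1\le b\le N-3$, and computes the dimension drop directly from Melnikov's formula to be exactly~$3$. Your approach instead works for an arbitrary $q\geq 2$: exploiting the fact that $j_1=N$ never contributes to any $f_t$, you peel off the transposition $(1,N)$ and reduce to the involutions $\tilde\tau,\tilde\tau_q$ on $\{1,\ldots,N-2\}$, obtaining the clean identity $\dim B_\sigma-\dim B_{\sigma'}=(\dim B_{\tilde\tau}-\dim B_{\tilde\tau_q})+2$; the closure-order fact that a type~\RN{1} move strictly lowers dimension then finishes the argument. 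Your route is more structural and shows at once that \emph{every} deletion with $q\ge 2$ drops the dimension by at least~$3$, not just the one with $q=2$; the paper's route is more hands-on and entirely self-contained, since it does not need to invoke the inequality $\dim B_{\tilde\tau_q}<\dim B_{\tilde\tau}$ coming from Rothbach's description of the orbit closure order.
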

\begin{proof}
If $(1,\, N)$ appears in $\sigma$ and $\sigma\in \mathbf{DP}(N)$,
let $q=2$ and ${\sigma}'$ be the result of deleting the second transposition from $\sigma$.
Since the $i$'s are increasing and $i_a< j_a$ for all $1\leq a\leq n$, we have $i_2=2$, so
\[
   {\sigma}'=
 \left(\begin{array}{ccccc}
  1    &  \widehat{2} &i_{3}&  \ldots & i_n \\
  \\
  N  &  \widehat{j_2} & j_{3}&  \ldots & j_n
     \end{array}\right).
\]
 Thus $3\leq {j_2} \leq N-1$. Let $j_2=N-b$ for some $1\leq b\leq N-3$. Then
\[
   {\sigma}=
 \left(\begin{array}{cccc}
  1    & 2 &  \ldots & i_n \\
  \\
 N  &  N-b &  \ldots & j_n
     \end{array}\right),
\]
and any number between $N-b$ and $N$ has to appear as a $j$ or an $i$ that is bigger than $j_2$. Therefore,
$$\left(\sum_{t=2}^n f_{t}^{1}({\sigma})-f_{t,2}^{1}({\sigma}')\right)+\left(\sum_{t=2}^n f_{t}^{2}({\sigma})-f_{t,2}^{2}({\sigma}')\right)=b-1.$$
Hence, $\dim(B_{\sigma})-\dim (B_{{\sigma}'})=N+2-N+b-(b-1)=3$, so $\sigma \notin \mathbf{DP}(N)$.
\end{proof}
Now we prove the main proposition of this subsection.
\begin{proposition}\label{propDPN}
If $\sigma \in \mathbf{DP}(N)$, then $j_p< j_t$ for all $p<t$, and therefore we cannot apply a move of type {\RN{3}} to $\sigma$. Conversely, if $\sigma\in \mathbf{RP}(N)$ and
we cannot apply a move of type of {\RN{3}} to $\sigma$, then $\sigma \in \mathbf{DP}(N)$.
\end{proposition}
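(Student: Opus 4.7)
The plan is to derive a single explicit formula for $\dim(B_\sigma) - \dim(B_{\sigma'})$ whenever $\sigma'$ is obtained from $\sigma$ by the type \RN{1} move that deletes the $q$th transposition, and then read off both halves of the proposition from this formula. Concretely, I expect to show
\[
\dim(B_\sigma) - \dim(B_{\sigma'}) \;=\; 2R_q + 1,
\qquad
R_q := \#\{\, p < q \,:\, j_p > j_q \,\}.
\]
Once this identity is in hand, $\sigma \in \mathbf{DP}(N)$ is equivalent to $R_q = 0$ for every $q$, which is equivalent to $j_p < j_q$ whenever $p < q$. A type \RN{3} move applied to the $a$th and $b$th transpositions (with $a<b$, hence $i_a<i_b$) requires precisely the containment pattern $i_a < i_b < j_b < j_a$, i.e.\ contributes $1$ to $R_b$; so the absence of all type \RN{3} moves is equivalent to the vanishing of every $R_q$, and both directions of the proposition follow.

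To establish the dimension formula, I first compute, using the notation $f_{t,q}^l(\sigma')$ fixed just before the statement, the difference
\[
\Delta \;:=\; \sum_{t=2}^{n} f_t(\sigma) \;-\; \sum_{t=2}^{n} f_{t,q}(\sigma').
\]
For $t \leq q-1$ the first $t$ transpositions of $\sigma$ and $\sigma'$ coincide, so those contributions cancel. For $t = q$ one has $f_{q,q}(\sigma') = 0$, contributing exactly $f_q(\sigma)$. For $t \geq q+1$ the $(t-1)$st transposition of $\sigma'$ equals the $t$th of $\sigma$, while its predecessors are the predecessors of $(i_t,j_t)$ in $\sigma$ with $(i_q,j_q)$ deleted; hence $f_{t-1}^1(\sigma') = f_t^1(\sigma) - [j_q < j_t]$ and $f_{t-1}^2(\sigma') = f_t^2(\sigma) - [j_q < i_t]$, giving
\[
\Delta \;=\; f_q(\sigma) \;+\; \sum_{t>q}\bigl( [j_q < j_t] + [j_q < i_t] \bigr).
\]
Melnikov's formula then yields $\dim(B_\sigma) - \dim(B_{\sigma'}) = N + (i_q - j_q) - \Delta$.

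The main combinatorial step is to reduce the right-hand side to $2R_q + 1$. I classify the $n-1$ transpositions $(i_p,j_p)$ with $p \neq q$ into six types according to their position relative to $(i_q,j_q)$, namely entirely left of, containing, crossing from the left, nested inside, crossing from the right, and entirely right of $(i_q,j_q)$, with respective cardinalities $L_q, R_q, M_q, N'_q, C_q, R'_q$. A direct inspection of the four Iverson conditions then gives $f_q(\sigma) = 2L_q + M_q$ and $\sum_{t>q}([j_q<j_t]+[j_q<i_t]) = C_q + 2R'_q$, so
\[
\dim(B_\sigma) - \dim(B_{\sigma'}) \;=\; N + i_q - j_q - 2L_q - M_q - C_q - 2R'_q.
\]
Counting which of the values in $\{1,\ldots,i_q-1\}$ and $\{j_q+1,\ldots,N\}$ appear as some $i_p$ or $j_p$ with $p \neq q$ yields the two identities $i_q - 1 = 2L_q + M_q + R_q$ and $N - j_q = R_q + C_q + 2R'_q$, which when substituted collapse the expression to exactly $2R_q + 1$.

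The main obstacle I anticipate is the bookkeeping in the middle step: tracking $\sum f_t(\sigma) - \sum f_{t,q}(\sigma')$ through the reindexing $t \mapsto t-1$ while separately keeping the parts $f_t^1$ and $f_t^2$ straight and verifying the two Iverson adjustments with the correct sign. Once that is settled, the six-way classification and the two linear identities are routine case checks, and both halves of the proposition follow immediately from $\dim(B_\sigma) - \dim(B_{\sigma'}) = 2R_q + 1$ together with the identification of type \RN{3} moves as containments $i_a < i_b < j_b < j_a$ with $a<b$.
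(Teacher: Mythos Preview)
Your argument is correct and in fact tidier than the paper's. The paper treats the two implications separately: for the forward direction it runs a downward induction on $k$, showing at each stage that $j_{n-k}<\cdots<j_n$ and that $j_p<j_{n-k}$ for every $p<n-k$, recomputing $\dim(B_\sigma)-\dim(B_{\sigma'})$ from Melnikov's formula at each step; for the converse it assumes from the outset that the $j$'s are increasing (which, as you correctly note, is equivalent to the absence of type \RN{3} moves) and then verifies $\dim(B_\sigma)-\dim(B_{\sigma'})=1$ via an ad hoc count with auxiliary quantities $F,G,A,B,a,b$. Your single identity $\dim(B_\sigma)-\dim(B_{\sigma'})=2R_q+1$, valid for every $\sigma\in\mathbf{RP}(N)$ and every $q$, subsumes both computations at once and is strictly more informative, since it gives the exact dimension drop for an arbitrary type \RN{1} deletion rather than only under the $\mathbf{DP}$ hypothesis. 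The six-way classification and the two linear identities $i_q-1=2L_q+M_q+R_q$ and $N-j_q=R_q+C_q+2R'_q$ are routine once one observes that $\sigma\in\mathbf{RP}(N)$ forces every integer in $\{1,\dots,N\}$ to appear exactly once among the $i$'s and $j$'s; the paper's $F,G,A,B,a,b$ count is essentially a special case of this same bookkeeping carried out under the extra hypothesis $R_q=N'_q=0$.
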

\begin{proof}
Assume that $\sigma\in \mathbf{DP}(N)$. We will prove the following statement by induction on $k$:
\begin{equation}\label{st1}
j_{n-k}<\ldots <j_{n-1}<j_{n} \mbox{ and } \forall \,\, (p\, < n-k),\,\, j_p<j_{n-k}.\tag{$\ast$}
\end{equation}
Suppose $k=0$. To prove (\ref{st1}), we need to show that $\forall \,\, p< n$, $j_p<j_{n}$.
Let ${\sigma}'$ be obtained by deleting $n$th transposition of $\sigma$.
\[
   {\sigma}'=
 \left(\begin{array}{ccccc}
  1    &  i_2 & \ldots & i_{n-1}& \widehat{i_n} \\
  \\
  j_1  &  j_2 & \ldots &j_{n-1} & \widehat{j_n}
     \end{array}\right).
\]
 Since $\sigma \in \mathbf{DP}(N)$, we have
\begin{equation}\label{eq1}
1=\dim (B_{\sigma})-\dim (B_{{\sigma}'})=N+(i_n-j_n)-\bigg( f_n^1(\sigma)+ f_n^2({\sigma}) \bigg).
\end{equation}
Since the total number of possible $j$ except $j_n$ is $n-1$, and any number between $i_n$ and $j_n$ has to appear as $j$,
we have ${f_n^2({\sigma})=n-1-(j_n-i_n-1)}$. By the {equation (\ref{eq1})}, $f_n^1(\sigma)=n-1$, so that $\forall \,\, p< n$, $j_p<j_{n}$ is true. Therefore $j_n=N$.

Now assume the statement (\ref{st1}) is true for $k$. Then we can visualise $\sigma$ as follows:
\[
\sigma=
 \left(\begin{array}{cccccccc}
 1\enspace <&i_2\enspace <&\ldots\enspace <&i_{n-k-1}\enspace < \enspace{ i_{n-k}\enspace < \enspace i_{n-k+1}}\enspace < \enspace&\ldots& < i_n \\
 \\
 j_1\quad   &j_2\quad     &\ldots \quad    &j_{n-k-1}\enspace \enspace \,\,\,\,  \enspace{ j_{n-k}\enspace < \enspace j_{n-k+1}}\enspace < \enspace&\ldots& < j_n
     \end{array}\right)
\]
     We need to prove (\ref{st1}) for $k+1$, that is,
\[
j_{n-k-1}<\ldots <j_{n-1}<j_{n} \mbox{ and } \forall \,\, (p\, < n-k-1),\,\, j_p<j_{n-k-1}.
\]
By the second part of the inductive hypothesis for $k$, we have ${j_{n-k-1}<j_{n-k}}$ so
 the first part of (\ref{st1}) is already true, and we only need to show that the second part holds.
In other words, it is enough to show that $f_{n-k-1}^1(\sigma)=n-k-2$.
Let ${\sigma}'$ be obtained by deleting $(n-k-1)$th transposition of $\sigma$. Then we have
$$\sum_{t=n-k}^n f_{t}^{1}({\sigma})-f_{t,n-k-1}^{1}({\sigma}')=k+1.$$
Let $w:=\#\{i_p \mid i_{n-k-1}<i_p<j_{n-k-1}\}$. Then
\begin{align*}
f_{n-k-1}^2(\sigma)  =n-(k+2)-(j_{n-k-1}-i_{n-k-1}-1-w),
\end{align*}
and
$$\sum_{t=n-k}^n f_{t}^{2}({\sigma})-f_{t,n-k-1}^{2}({\sigma}')=k+1-w.$$

 By the fact that $\dim (B_{\sigma})-\dim (B_{{\sigma}'})=1$, we have
$f_{n-k-1}^1(\sigma)=n-k-2$. Thus the first claim is proved.

\indent Conversely, given $\sigma\in \mathbf{RP}(N)$, suppose that ${\sigma}'$ is the result of applying the move of type {\RN{1}}
 that deletes the $q$-th transposition of $\sigma$. Note that $f_{q}^1(\sigma)=q-1$ and
  ${\sum\limits_{t=q+1}^n f_{t}^1(\sigma)-f_{t,q}^{1}({\sigma}')=n-q}$. Hence,
\[
\sum_{t=2}^n f_{t}^1(\sigma)- f_{t,q}^{1}({\sigma}')=n-1.
\]
 Then,
\begin{displaymath}
\dim (B_{\sigma})-\dim(B_{{\sigma}'})=N+(i_q-j_q)-(n-1)-\bigg(\sum_{t=2}^n f_{t}^2(\sigma)-f_{t,q}^{2}({\sigma}')\bigg).
\end{displaymath}
We also have the difference $f_{t}^2(\sigma)-f_{t}^2({\sigma}')=0$ when $\displaystyle t\in \{1,\ldots, q-1\}$. Therefore,
\begin{align*}
\sum_{t=2}^n f_{t}^2(\sigma)- f_{t,q}^{2}({\sigma}')&= \sum_{t=q}^n \#\{j_p\mid p< t, j_p< i_t\}\\
& \,\, \quad  -\sum_{t=q+1}^n \#\{j_p\mid p< t,p\neq q, j_p< i_t\} \\
 & =\#\{j_p\mid j_p< i_q\}+\#\{i_t\mid  j_q< i_t\}.
\end{align*}
Let $F=\#\{j_p\mid j_p< i_q\}$ and $G=\#\{i_t\mid  j_q< i_t\}$.
Note that numbers between $i_q$ and $j_q$ must appear as $j_l$ for $l<q$ or as $i_s$ where $s>q$. Let
$a=\#\{j_p\mid  i_q<j_p<j_q\}$ and $b=\#\{i_t\mid   i_q< i_t<j_q\}$. Then $a+b=j_q-i_q-1$.
Let $A=\#\{j_p\mid  i_q<j_p\}$ and $B=\#\{i_t\mid   i_t<j_q\}$.
We have $A-a+B-b-1=n$. Therefore, $A+B=n+j_q-i_q$.
\begin{displaymath}
 \sigma =
 \left(\begin{array}{c}
 {{ \overbrace{
  {i_1,\ldots \ldots, i_q}
   \underbrace{\ldots }_{\mbox{b}}
  }^{\mbox{B}}}\, {\overbrace{ \ldots , i_n }^{\mbox{G}}}} \\
  { \underbrace{ {j_1,\ldots}}_{\mbox{F}}}\,{ \underbrace{ { \overbrace{\ldots}^{\mbox{a}}}{ j_q,\ldots \ldots, j_n}}_{\mbox{A}} }
  \end{array}
 \right)
\end{displaymath}
 Since $\sigma\in \mathbf{RP}(N)$, $A+B+F+G=2n$. Then $F+G=n-j_q+i_q$, and
$\dim (B_{\sigma})-\dim(B_{{\sigma}'})=1$, so $\sigma \in \mathbf{DP}(N)$.
\end{proof}

\begin{lemma} \label{lemmaA}
For every $X$ in $V(d)-L(d)$ we have $$r_{ij}(X)\geq j-i+1-n.$$
\end{lemma}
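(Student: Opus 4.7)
My plan is to reduce the statement to elementary linear algebra by interpreting $r_{ij}(X)$ as the rank of a canonically defined map on subquotients of $k^N$ cut out by the standard flag, and then applying rank--nullity twice. First I would fix the standard basis $e_1,\ldots,e_N$ of $k^N$ and set $F_k := \Span(e_1,\ldots,e_k)$. Strict upper triangularity gives $X(F_k)\subseteq F_{k-1}$, and the lower-left $(N-i+1)\times j$ submatrix of $X$ is precisely the matrix, in the obvious bases, of the composition
\[
\phi \colon F_j \hookrightarrow k^N \xrightarrow{\,X\,} k^N \twoheadrightarrow k^N/F_{i-1}.
\]
Consequently $r_{ij}(X) = j - \dim K$, where $K := \{\,v \in F_j : X(v) \in F_{i-1}\,\}$, and the whole problem becomes one of bounding $\dim K$ from above.

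The next step is a two-fold bound on $\dim K$. Since $X|_K$ has image contained in $F_{i-1}$, one has $\dim X(K)\leq i-1$. Since $F_j\cap \ker X \subseteq K$, rank--nullity applied to $X|_K$ gives
\[
\dim K = \dim X(K) + \dim(F_j\cap \ker X),
\]
so it remains to bound $\dim(F_j\cap \ker X)$. For this I would invoke the hypothesis $X \in V(d)-L(d)$: since $X^2=0$ the image of $X$ sits inside $\ker X$, forcing $\operatorname{rank}(X)\leq N/2=n$; since $X\notin L(d)$ the rank is at least $n$; hence $\operatorname{rank}(X)=n$ and $\dim\ker X = n$, so $\dim(F_j\cap \ker X)\leq n$.

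Combining the three bounds yields $\dim K\leq (i-1)+n$, and therefore
\[
r_{ij}(X) = j - \dim K \;\geq\; j-i+1-n,
\]
as required. The only conceptual step is the initial reformulation of $r_{ij}(X)$ as the rank of $\phi$, which I view as the main (mild) obstacle; once that identification is made, the argument is entirely routine and, notably, requires neither the Borel-orbit stratification nor Melnikov's dimension formula developed earlier in the paper.
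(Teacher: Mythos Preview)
Your argument is correct. The paper's own proof is considerably shorter and takes a slightly different tack: it simply records that $r_{1N}(X)=n$ (the rank of the whole matrix) and then invokes the elementary inequality
\[
r_{ij}(X)+(i-1)+(N-j)\ \geq\ r_{1N}(X),
\]
which holds because deleting a single row or column from a matrix drops its rank by at most one; rearranging and using $N=2n$ gives $r_{ij}(X)\geq j-i+1-n$ in one line. Your route via the flag $F_\bullet$, the identification of the lower-left block with the induced map $F_j\to k^N/F_{i-1}$, and the rank--nullity bound on $\dim K$ is a perfectly valid alternative and makes the linear algebra more explicit, but it is more work than the paper expends: the paper never needs to name $\phi$ or $K$, and in particular never needs the separate bound $\dim(F_j\cap\ker X)\leq n$, since the row/column deletion inequality already encodes both contributions at once. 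Neither argument uses any of the Borel-orbit or Melnikov material, so your closing remark is accurate for both proofs.
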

\begin{proof}
The rank of $X$ is $n$, so $r_{1N}(X)=n$. The result follows from the inequality
$$r_{ij}(X)+(i-1)+(N-j)\geq r_{1N}(X).$$
\end{proof}

We now define our last set of permutations:
\begin{itemize}
\item $\mathbf{MP}(r,N)$ is the set of minimal permutations in $\mathbf{P}(N)$
that appear as a
permutation in the form $\sigma _{\psi }$ for some $d$ and nonconstant morphism
$\psi: \mathbb{P}^{r-1}_k\rightarrow V(d)-L(d)$.
\end{itemize}
We now state and prove our second main result.
\begin{theorem} \label{mainthm2}
Conjecture \ref{conj2afternotations} holds for $r\leq 2$.
\end{theorem}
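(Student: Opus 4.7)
For the case $r=1$, the morphism $\psi$ corresponds to a single rank-$n$ matrix $X \in V(d)_{\RowNumber\ColNumber} - L(d)$ whose nonzero entries lie in the $(N-\RowNumber)\times(N-\ColNumber)$ upper-right block, forcing $\RowNumber, \ColNumber \leq n$ and hence $N = 2n \geq \RowNumber + \ColNumber$.

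For $r = 2$, I proceed by contradiction: suppose $\psi: \mathbb{P}^1_k \to V(d)_{\RowNumber\ColNumber} - L(d)$ is a nonconstant morphism with $N < 2(\RowNumber + \ColNumber)$. The plan has two ingredients. First, I reduce the shape of $\sigma_\psi$: by the observation in the proof of Theorem \ref{mainthm1} that type III moves preserve the numbers of leading zero rows and ending zero columns, I may replace $\sigma_\psi$ with the unique maximal $\widetilde\sigma \geq \sigma_\psi$ reachable from $\sigma_\psi$ by type III moves alone. Proposition \ref{propDPN} then places $\widetilde\sigma$ in $\mathbf{DP}(N)$, so its transpositions $(i_t, j_t)$ satisfy $j_1 < j_2 < \cdots < j_n = N$; this drastically restricts the combinatorial shapes one must handle. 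Second, I normalize $\psi$ by the operations $R_{i,j}(u)$ and $D_i(v)$ from Section \ref{opsection}, mirroring the approach of cases (v)-(vii) in the proof of Theorem \ref{mainthm1}: these operations iteratively clear or rescale chosen entries while preserving the relevant rank-of-submatrix structure.

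After normalization, I extract a finite family of $n \times n$ minors $\mu_1, \ldots, \mu_\ell \in k[x, y]$ of $\psi$ with the following key property: any common zero $\gamma \in \mathbb{P}^1_k$ of the $\mu_i$ would force $\psi(\gamma) \in L(d)$, by combining Lemma \ref{lemmaA} with the entry vanishing enforced by the normalization; this contradicts the hypothesis that the image of $\psi$ avoids $L(d)$, so the $\mu_i$ must have no common zero on $\mathbb{P}^1_k$. Since $m \geq 1$ and $d_i - d_j + 1 \geq 1$ for $i < j$, every nonzero $\psi_{ij}$ is homogeneous of positive degree in $k[x, y]$, so each nonzero $\mu_i$ is also of positive degree. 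The inequality $N < 2(\RowNumber + \ColNumber)$ should, via the combinatorics of $\widetilde\sigma$, produce either $\ell \geq 3$ such positive-degree minors (impossible, since no three positive-degree homogeneous polynomials in the Krull-dimension-two ring $k[x, y]$ can form a regular sequence), or two minors that share an explicit nontrivial common polynomial factor exhibited as a subdeterminant (as in case (vii) of Theorem \ref{mainthm1}), either of which provides the required contradiction. The principal obstacle is the combinatorial bookkeeping: verifying, for every $\widetilde\sigma \in \mathbf{DP}(N)$ compatible with the hypotheses and satisfying $\RowNumber + \ColNumber > n$, that the normalization yields such a family $\mu_1, \ldots, \mu_\ell$. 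A uniform treatment will likely combine Melnikov's dimension formula with the inequality $\RowNumber + \ColNumber > n$ to locate the correct minors, and may proceed by induction on $N$ to reduce to the small shapes already handled in Theorem \ref{mainthm1}.
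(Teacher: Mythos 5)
Your $r=1$ argument is clean and correct. For $r=2$ the high-level ingredients you identify (Lemma \ref{lemmaA}, the structure of $\mathbf{DP}(N)$, the operations $R_{i,j}$ and $D_i$, a contradiction via a minor) are the same ones the paper uses, but the reduction you rely on is wrong in a way that is fatal to the argument.

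The claim that the maximal $\widetilde\sigma \geq \sigma_\psi$ reachable along type~{\RN{3}} moves lies in $\mathbf{DP}(N)$ is false, and in fact backwards. Proposition \ref{propDPN} characterizes $\mathbf{DP}(N)$ as the elements to which \emph{no} type~{\RN{3}} move can be applied; since moves decrease in the Borel order, the elements of $\mathbf{DP}(N)$ sit at the \emph{bottom} of type~{\RN{3}} chains, not the top. Going upward (inverting type~{\RN{3}}) converts crossing pairs to nested pairs, which moves you \emph{away} from the no-nested-pair condition defining $\mathbf{DP}(N)$. Concretely, $(1,6)(2,5)(3,4)$ is a maximal element of $\mathbf{RP}(6)$ with $j_1>j_2>j_3$, so it is not in $\mathbf{DP}(6)$; and the terminal permutation you reach by undoing type~{\RN{3}} moves starting from, say, $(1,5)(2,4)(3,8)(6,7)\in \mathbf{RP}(8)$ still has nested pairs. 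In general you cannot replace $\sigma_\psi$ by an element of $\mathbf{DP}(N)$ this way, so the whole case analysis you build on top of it collapses for $\sigma_\psi \notin \mathbf{DP}(N)$. The paper handles exactly this case by a genuinely different device: it recursively \emph{perturbs} $\psi$ using $R_{i,j}$ and $D_i$ with freshly adjoined indeterminates $u_1,u_2,\dots$, applying type~{\RN{3}} moves in the \emph{decreasing} direction at each stage, carefully tracking the locus $Z_s$ where the rescalings degenerate, and only then extracting a single minor $m$ whose simultaneous solvability with $p=1$ yields the contradiction via Lemma \ref{lemmaA}. Nothing in your outline reproduces or substitutes for this perturbation machinery.

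A secondary issue: in the $\mathbf{DP}(N)$ case the paper uses a \emph{single} $(\RowNumber+\ColNumber-n)\times(\RowNumber+\ColNumber-n)$ minor $m^{\underline{i},\ldots,\overline{i}}_{\underline{j},\ldots,\overline{j}}$, located by showing the corresponding block of $P_\sigma$ is the identity; the contradiction is simply that a nonconstant homogeneous polynomial in $k[x,y]$ has a root on $\mathbb{P}^1_k$. Your alternative route via a family $\mu_1,\dots,\mu_\ell$ and a ``three polynomials in $k[x,y]$ cannot form a regular sequence'' dichotomy is not sound as stated: three positive-degree homogeneous forms in $k[x,y]$ can certainly have no common zero on $\mathbb{P}^1$ without forming a regular sequence, so failure to be a regular sequence does not produce the common root you need. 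And the suggested fallback (induction on $N$ back to Theorem \ref{mainthm1}) is not what the paper does and is not obviously available, since the perturbation step changes the morphism rather than reducing its size.
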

\begin{proof}
We have $\mathbf{MP}(1,N)= \{(1,n+1)(2,n+2)\dots (n,N)\}$ (see Example \ref{Ex1}). This means Conjecture
\ref{conj2afternotations} holds for $r=1$, because $N-n+(n+1)-1=N\leq N$. Hence it is enough to prove Conjecture
\ref{conj2afternotations} for $r=2$.

 Suppose that Conjecture
\ref{conj2afternotations} does not hold for $r= 2$. Then there exists
an $N$-tuple of nonincreasing integers $d=(d_1,d_2,\dots ,d_N)$, two
positive integers $\RowNumber$,
$\ColNumber$, and a nonconstant morphism $\psi:
\mathbb{P}^{1}_k\rightarrow {V(d)_{\RowNumber  \ColNumber}}-L(d)$
such that $N<2(\RowNumber  +\ColNumber
)$, or equivalently $n< \RowNumber  +\ColNumber$. Write
$\sigma _{\psi }=(i_1,j_1)(i_2,j_2)\ldots(i_n,j_n)$ with
$1=i_1<i_2<\dots<i_n$ and
$i_a< j_a$ for all $1\leq a\leq n$.

First assume that $\sigma \in \mathbf{DP}(N)$. By Proposition
\ref{propDPN},
we have $j_1<j_2<\dots<j_n=N$. Therefore, $\ColNumber   = j_1 - 1$ and
$\RowNumber   = N - i_n$.
Moreover, for every $a$ we
have $j_a>\ColNumber  $ and $i_a<N-\RowNumber  +1$.
Set $I:=\{i_1,\ldots, i_n\}$. Note that $\{1,\ldots,
\ColNumber\}\subseteq I$ since $\forall a, j_a>\ColNumber $. So, $i_a=a$
if $a\leq \ColNumber $. Similarly, $j_{a-n}=a$ if $a\geq N-\RowNumber
+1$.
Set $$\underline{i}:=n-\RowNumber  +1 \mbox{ \ and \  }
\overline{i}:=\ColNumber $$
and
     $$\underline{j}:=N-\RowNumber  +1 \mbox{ \ and \
}\overline{j}:=\ColNumber  +n.$$
We have
$$\overline{i}-\underline{i}=\RowNumber  +\ColNumber
-n-1=\overline{j}-\underline{j}.$$
In particular, $\underline{i}\leq \overline{i}$ and $\underline{j}\leq \overline{j}$,
since we assumed that $n< \RowNumber  +\ColNumber$.

For $i\leq i'$ and $j\leq j'$, let $A^{i,i'}_{j,j'}$ denote the
submatrix of the partial permutation matrix $P_{\sigma}$ obtained by
considering the rows from  $i$ to  $i'$ and columns from  $j$ to  $j'$.
Note that $A^{1,\overline{i}}_{1,N}$ has $\ColNumber $ many $1$'s and
$A^{1,N}_{\underline{j},N}$ has $\RowNumber $ many $1$'s. Hence
$A^{1,\overline{i}}_{\underline{j},N}$ must have
$(\RowNumber+\ColNumber-n)$ many $1$'s. However, there is no $1$ in
$A^{1,\underline{i}-1}_{\underline{j},N}$; otherwise there would exist
$a$ such that $i_a<\underline{i}=n-\RowNumber +1$ and $j_a\geq
\underline{j}$, which leads to a contradiction by considering the number of
$1$'s in the region determined by the union of $A^{1,i_a-1}_{1,N}$ and
$A^{1,N}_{j_a+1,N}$. Similarly, there is no $1$ in
$A^{1,\overline{i}}_{\overline{j}+1,N}$.
Hence the $(\RowNumber  +\ColNumber -n)\times (\RowNumber
+\ColNumber-n) $-submatrix
$A^{\underline{i},\overline{i}}_{\underline{j},\overline{j}}$ contains
$(\RowNumber  +\ColNumber-n)$ many $1$'s.
Thus, $A^{\underline{i},\overline{i}}_{\underline{j},\overline{j}}$ is
the identity matrix of dimension $\RowNumber  +\ColNumber -n$.
\begin{displaymath}
\begin{blockarray}{ccccccccccccccccc}
 &1 & & & & \ColNumber & & & & & \underline{j}& &\overline{j} & & & \\
\begin{block}{[ cccccccccccccccc]l}
      & \tikzmark{left1}0 &  &\ldots & & 0 & & & & & & & & & & & \\
      &  &  & & &   & & & & & & & & & & &\\
&\vdots & & & &\vdots & &  &  & &\tikzmark{left2} & & &&&& \underline{i}=n-\RowNumber + 1 \\
&   &  & & &  & & & & &  &  A^{\underline{i},\overline{i}}_{\underline{j},\overline{j}}& & & & &\\
& 0 &  &\ldots & & 0 \tikzmark{right1} & & & & & & &\tikzmark{right2} & & && \overline{i}= \ColNumber  \\
&   & & & &  & & & & & & & & & & & \\
&   & & & &  & & & & & & & & & & & \\
&   & & & &  & & & & & & & & & & & \\
&   & & & &  & & & & & & & & & & & \\
&   & & & &  & & & & &\tikzmark{left3}0 & &\ldots &  & 0 & &{\underline{j}}=N-\RowNumber+1\\
&   & & & &  & & & & &  & & & &  & & \\
&   & & & &  & & & & & \vdots & & & &\vdots & & \\
&   & & & &  & & & & &  & & & & & &\\
&   & & & &  & & & & &  & & & & & &\\
&   & & & &  & & & & & 0& & \ldots& & 0\tikzmark{right3}& & N \\
\end{block}
\tikzdrawbox{1}{thick,black}
\tikzdrawbox{2}{thick,black}
\tikzdrawbox{3}{thick,black}
\end{blockarray}
\end{displaymath}
 This in
particular means that
for every $X$ in the image of  $\psi $ we have
$$r_{\underline{i},\underline{j}-1}(X)=r_{n-\RowNumber  +1,N-\RowNumber
}(X)=0$$ and
$$r_{\overline{i}+1,\overline{j}}(X)=r_{\ColNumber  +1,\ColNumber
+n}(X)=0.$$
Since $k$ is algebraically closed, there exists a root of the minor
$m^{\underline{i},\underline{i}+1\ldots ,
\overline{i}}_{\underline{j},\underline{j}+1\ldots ,\overline{j}}$.
Thus, there exists $X$ in the image of  $\psi $ such that
$$r_{\underline{i},\overline{j}}(X)\leq \overline{i}-\underline{i},$$
which means
$$r_{n-\RowNumber  +1,\ColNumber  +n}(X)\leq  \RowNumber  +\ColNumber
-n-1.$$
Lemma \ref{lemmaA}
implies that  for every $X$ in the image of
$\psi $ we have
$$r_{n-\RowNumber  +1,\ColNumber  +n}(X)\geq \ColNumber
+n-(n-\RowNumber  +1)+1-n=\RowNumber  +\ColNumber  -n.$$
This is a contradiction, so we are done with the case $\sigma \in \mathbf{DP}(N)$.

Now assume that $\sigma \notin \mathbf{DP}(N)$. We recursively define
perturbations of $\psi $ so that we can again use the square submatrix
$A^{\underline{i},\overline{i}}_{\underline{j},\overline{j}}$ to get a contradiction
similar to that of the previous case. Set $\psi ^0=\psi $, $n_0=0$, $Z_0=\emptyset $. We
have a rational map $\psi ^0: \mathbb{A}_k^{2+n_0}-Z_0\rightarrow V_N$. Now given
$$\psi ^s: \mathbb{A}_k^{2+n_s}-Z_s\rightarrow V_N$$ we define $\psi ^{s+1}:
\mathbb{A}_k^{2+n_{s+1}}-Z_{s+1}\rightarrow V_N$ when $\sigma_s=\sigma_{\psi^s}$ is
not in $\mathbf{DP}(N)$.

Assume $\sigma_s\notin \mathbf{DP}(N)$. By Proposition \ref{propDPN},
there exists a  move of type {\RN{3}}
that we can apply to $\sigma_s$. Hence, we may define
$$l'_s:=\min\{\,l'\,|\,l'<l<\sigma_s(l)<\sigma_s(l')\,\}$$
and
$$l_s:=\sigma_s
\left(\min\{\,\sigma_s(l)\,|\,l'_s<l<\sigma_s(l)<\sigma_s(l'_s)\,\}\right).$$
In case $l'_s< \underline{i}$, we define
$$n_{s+1}:=n_s+l_s-l'_s+1.$$
Note that $l_s>l'_s$ and so $n_{s+1}\geq n_s+2$. Hence the affine variety $Z_s$ can
be considered as a subvariety of $\mathbb{A}_k^{2+n_{s+1}}$ by considering
$\mathbb{A}_k^{2+n_{s}}\subset \mathbb{A}_k^{2+n_{s+1}} $.
Here we write $(x,y,u_1,u_2,\ldots u_{n_{s+1}})$ to denote a point in
$\mathbb{A}_k^{2+n_{s+1}}$.
Hence $\mathbb{A}_k^{2+n_{s}}$ corresponds to the points where
${u_{n_s+1}=\ldots=u_{n_{s+1}}=0}$.

Represent $\psi^s
$ by a matrix $(\psi^s_{ij})$ whose $(i,j)$-entry $\psi^s_{i,j}\in k(x,y, u_1,u_2,\ldots u_{n_{s}})$ is
a rational function. Let $p_i$ denote the
entry $\psi^s_ {i,\sigma(l_s)}$ which is a polynomial. Also let $\bar{p}$ be the greatest common divisor of
$p_{l'_s},p_{l'_s+1},\ldots , p_{l_s} $. Set $p'_i:=p_i/\bar{p}$
for $i\in \{l'_s,l'_s+1,\ldots,l_s\} $. We define
$$Z_{s+1}=Z_s\cup V\left(\, u_{n_s+1}\, , \,
\sum_{i:=l'_s}^{l_s}u_{n_s+i-l'_s+1}p'_i\, \right).$$
We obtain $\psi ^{s+1}$ from $\psi ^s$ by first applying $D_{l'_s}(u_{n_s+1})$, then
$R_{i,l'_s}(u_i)$ where $l'_s<i\leq l_s$, then
$D_{i}(\sum_{i:=l'_s}^{l_s}u_{n_s+i-l'_s+1}p'_i)$ for  $l'_s<i\leq l_s$, and
finally applying $R_{l'_s,i}(-p'_{i})$ for  $l'_s<i\leq l_s$.
 Notice that $p'_i$ also depends on $s$, so we write
$p'_{s,i}$ instead of $p'_i$ when $s$ is not clear.

We can repeat this process until it is no longer possible to find a move of type
{\RN{3}}
with $l'_s< \underline{i}$. At the end of this part of the process we obtain a rational map $\psi^t$ for some $t$.
Then we can continue with the symmetric (with
respect to the diagonal of the matrix running from the lower left entry to the upper
right entry) operations assuming
 $\psi^s$ is defined for $s\geq t$. We define $\psi^{s+1}$ as follows:
$$l'_s:=\max\{\,l'\,|\,l'<l<\sigma_s(l)<\sigma_s(l')\,\}$$
and
$$l_s:= \sigma_s\left(\max\{\,{\sigma}_s(l)\,|\,l'_s<l<\sigma_s(l)<\sigma_s(l'_s)\,\}\right).$$
We repeat the symmetric operations as long as we have $\sigma_s(l'_s)> \overline{j}$.
 We define $n_{s+1}:=n_s+\sigma_s(l'_s)-\sigma_s(l_s)+1$.
 Let $p_j$ denote the entry $\psi_{l_s,j}^s$ which is a polynomial. Also let $\bar{q}$ be the greatest  common divisor of $p_{\sigma_s(l_s)},p_{\sigma(l_s)+1},\ldots, p_{\sigma_s(l'_s)}$.
Similarly, we write $p'_{s,j}$ instead of $p'_j$ when $s$ is not clear.

At the end of this process we obtain a rational map ${\psi}^{\bar{t}}$ from the quasi affine
variety $U=\mathbb{A}_k^{2+n_{\bar{t}}}-Z_{\bar{t}}$ to $V_N$ where we have
$r_{\underline{i},\underline{j}-1}(X)=r_{n-\RowNumber  +1,N-\RowNumber
}(X)=0$ and
$r_{\overline{i}+1,\overline{j}}(X)=r_{\ColNumber  +1,\ColNumber
+n}(X)=0$
for every $X$ in the image of this rational map. Denote the composition of
$m^{\underline{i},\dots,\overline{i}}_{\underline{j},\dots
\overline{j}}$ with $\psi ^{\bar{t}}$ by $m$. Notice that $m$ is a polynomial in
$k[x,y,u_1,\ldots u_{n_{\bar{t}}}]$.
Define another polynomial $p$ in the same polynomial algebra as follows:
$$p=\left(\prod_{s=0}^{t-1} \, u_{n_s+1} \sum_{i=l'_s}^{l_s}u_{n_s+i-l'_s+1}p'_{s,i}\right)
\left(\prod_{s=t}^{\bar{t}-1} \, u_{n_s+1} \sum_{j=\sigma(l_s)}^{\sigma(l'_s)}u_{n_s+\sigma(l'_s)-j+1}p'_{s,j}\right).$$
For ${1\leq s\leq t}$, we have $p'_{s,l'_s}, p'_{s,l'_{s}+1},\ldots , p'_{s,l_s}$ are relatively prime. Similarly, for ${t+1\leq s\leq \bar{t}}$ we have $p'_{s,\sigma(l_s)}, p'_{s,\sigma(l_{s})+1},\ldots ,  p'_{s,\sigma(l'_s)}$ are relatively prime. Moreover, the polynomial $m$ has an irreducible factor
in $k[x,y]$ or for some $s$,  the polynomial $m$ has an irreducible factor in the form
$fu_{n_s}+g$, where $f$ and $g$ are in $k[x,y,u_1,\ldots u_{{n_s}-1}]$
so that $f$ is neither an associate of $p'_{s,{l_s}}$ nor
 $p'_{s,{\sigma(l_s)}}$.
 Hence, there exists a
solution to the equations $p=1$ and $m=0$, which is again a contradiction by Lemma
\ref{lemmaA}.
\end{proof}
\section{Examples and Problems}\label{Examples}
 The last inequality in Conjecture
 \ref{conj2afternotations} is equivalent to
 \begin{displaymath}
r\leq {\left\lfloor{ \log_2{\left( \frac{N}{\RowNumber
+\ColNumber}\right)}}\right\rfloor}+1.
\end{displaymath}
One might ask how strict this upper bound for $r$ is.

In all the following examples, we define $\psi$ from
$\mathbb{P}^{r-1}_k$ to $V(d)_{\RowNumber \ColNumber}-L(d)$ for
 different values of $r$, $N$ and $\RowNumber+\ColNumber$ where
  $r= {\left\lfloor{ \log_2{\left( \frac{N}{\RowNumber
+\ColNumber}\right)}}\right\rfloor}+1$.
It follows that we do not have a better upper bound for $r$ in these cases.
\begin{example}\label{Ex1}
For $r=1$, $N=2n$, $d=(0,\ldots,0)$ and $\RowNumber
+\ColNumber=N$, define
\begin{displaymath}
\psi(x)=
\left[
\begin{array}{ccc|ccc}
 & & & & & \\
 & 0 & & &M& \\
 & & & & & \\
\hline
 & & & & & \\
 & 0 & & &0& \\
 & & & & & \\
\end{array}
 \right]
\mbox{ }\mbox{ where}\mbox{ }M = \left[
\begin{array}{ccc}
x  &       & 0\\
   &\ddots &  \\
 0 &       & x\\
 \end{array}
  \right].
\end{displaymath}
Note that $\sigma_{\psi}=(1,n+1)(2,n+2)\dots (n,N)$. This example shows that
$$\mathbf{MP}(1,N)= \{(1,n+1)(2,n+2)\dots (n,N)\}.$$
\end{example}
\begin{example}\label{Ex2}
For $r=2$ and $N=4$, $d=(0,0,0,0)$ and $\RowNumber  +\ColNumber =2$, define
\begin{displaymath}
\psi(x,y)=
\left[
\begin{array}{rrrr}
0&x&y&0\\
0&0&0&y\\
0&0&0&-x\\
0&0&0&0\\
 \end{array}
 \right].
\end{displaymath}
In this example, $\sigma_{\psi}=(1,2)(3,4)$. Hence, $$\mathbf{MP}(2,4)=
\{(1,2)(3,4)\}.$$
\end{example}
\begin{example}\label{Ex3}
For $r=2$, $N=6$, $d=(0,-1,-1,-1,-1,-1)$, and $\RowNumber  +\ColNumber=3$, set:
\begin{displaymath}
\psi(x,y)=
\left[
\begin{array}{rrrrrr}
0&x^2&xy&y^2&0&0\\
0&0&0&0&y&0\\
0&0&0&0&-x&y\\
0&0&0&0&0&-x\\
0&0&0&0&0&0\\
0&0&0&0&0&0\\
 \end{array}
 \right].
\end{displaymath}
 Here, we have $\sigma_{\psi}=(1,2)(3,5)(4,6)$. Considering the
 Hasse diagram for $\mathbf{RP}(6)$ in Figure \ref{fig:2} and symmetry it is clear
 that $$\mathbf{MP}(2,6)=\{ \,(1,2)(3,5)(4,6)\, , \, (1,3)(2,4)(5,6)\, \}.$$
\end{example}
The above example can be generalized:
\begin{example}
 For $r=2$, $N=2n$, $d=(0,-n+2,\ldots,-n+2)$, and  $\RowNumber
+\ColNumber=n$, set:
\begin{displaymath}
\psi(x,y)=
\left[
\begin{array}{rrrrrrrrr}
0&x^{n-1}& x^{n-2}y & \ldots & y^{n-1}& 0 &   & \ldots & 0\\
 &   0   &  0       &\ldots  & 0      & y &   &        &\vdots\\
 &       &  0       &        &        & -x & y &        & \\
 &       &          &        &        &   & -x & \ddots & 0 \\
 &       &          &        &   \ddots      &   &   & \ddots & y\\
 &       &          &        &        &  &   &        & -x \\
 &       &          &        &        &   &   &        & 0 \\
 &       &          &        &        &   &   &        & \vdots\\
 &       &          &        &        &   &   &        & 0 \\
 \end{array}
 \right].
\end{displaymath}
\end{example}
We can use the above examples to obtain new ones by the chess board construction:
\begin{construction}
Let $(l_1,l_2,\ldots, l_m)$ be an $m$-tuple of positive integers and
$V(d)_{(l_1,l_2,\ldots, l_m)}$ the subvariety of $V(d)$
such that $x_{ij}=0$ when $l_1+l_2+\ldots+ l_{(s-1)}+1\leq i< j \leq l_1+l_2+\ldots+
l_{s}$ for some $1\leq s\leq m$.  For example, the following matrix
$\psi(x,y)$ is in $V(d)_{(1,3,2)}$ where $d$ is $6$-tuple of nonincreasing
integers:
\[\psi(x,y)=
\begin{tikzpicture}[baseline=-\the\dimexpr\fontdimen22\textfont2\relax ]
\matrix (m)[matrix of math nodes,left delimiter={[},right delimiter={]}]
{
0&x^2&xy&y^2&0&0\\
0&0&0&0&y&0\\
0&0&0&0&-x&y\\
0&0&0&0&0&-x\\
0&0&0&0&0&0\\
0&0&0&0&0&0\\
};

\begin{pgfonlayer}{myback}
\highlight[black]{m-1-1}{m-1-1}
\highlight[black]{m-2-2}{m-4-4}
\highlight[black]{m-5-5}{m-6-6}
\end{pgfonlayer}
\end{tikzpicture}.
\]
Take
$\psi_1 \in V(d_1)_{(l_1^1,l_2^1,\ldots,l_m^1)}$ where $d_1$ is $N_1$-tuple
nonpositive integers and ${\psi_2 \in V(d_2)_{(l_1^2,l_2^2,\ldots,l_m^2)}}$ where
$d_2$ is $N_2$-tuple of nonincreasing integers.
We arrange a $(N_1+N_2)\times (N_1+N_2)$ matrix in a $2m\times 2m$-chessboard as
follows:
The $ij$-square contains a
$l_{\left\lfloor{{\frac{i+1}{2}}}\right\rfloor}^{\varepsilon_i}\times
l_{\left\lfloor{{\frac{j+1}{2}}}\right\rfloor}^{\varepsilon_j}$ matrix such that
$\varepsilon_k=1$ if
$k$ is odd
or $\varepsilon_k=2$ if $k$ is even integer. Now we color the $ij$ square black if
$\varepsilon_i\neq \varepsilon_j$ and white if $\varepsilon_i= \varepsilon_j$.
Fill in the $ij$ square with zeros if it is a black square and otherwise fill it in with
$(x_{{i'}{j'}})$ where $\underline{i}\leq i'\leq \overline{i}$ and
$\underline{j}\leq j'\leq \overline{j}$
part of $\psi_{\varepsilon_i}$ where $\underline{i}, \overline{i}, \underline{j}, \overline{j}$ are defined by
\begin{displaymath}
\displaystyle{\underline{s}=\sum_{m=1}^{{\left\lfloor{{\frac{s+1}{2}}}\right\rfloor}-1}l_m^{\varepsilon_s}+1}
\mbox{ and }
\displaystyle{\overline{s}=\sum_{m=1}^{\left\lfloor{\frac{s+1}{2}}\right\rfloor}
l_m^{\varepsilon_s}}.
\end{displaymath}
\end{construction}
For instance, using chessboard construction we can obtain an example:
\begin{example}
 For $r=2$, $N=4+6=10$, $d=(0,0,-1,-1,-1,-1,-1,-1,-1,-1)$, and $\RowNumber
+\ColNumber=3+2=5$ we obtain an example by applying the chess board construction on
the morphisms in Examples \ref{Ex2} and \ref{Ex3}.
\begin{center}\chessboard{
\TextOnWhite{$0$}\TextOnBlack{}\TextOnWhite{${x^2}$}\TextOnWhite{${xy}$}\TextOnWhite{${y^2}$}%
\TextOnBlack{}\TextOnBlack{}\TextOnWhite{$0$}\TextOnWhite{$0$}\TextOnBlack{}\\
\TextOnBlack{}\TextOnWhite{$0$}\TextOnBlack{}\TextOnBlack{}\TextOnBlack{}%
\TextOnWhite{$ x$}\TextOnWhite{$y$}\TextOnBlack{}\TextOnBlack{}\TextOnWhite{$0$}\\
\TextOnWhite{$ 0$}\TextOnBlack{}\TextOnWhite{$ 0$}\TextOnWhite{$ 0$}\TextOnWhite{$
0$}%
\TextOnBlack{}\TextOnBlack{}\TextOnWhite{$\,\,y$}\TextOnWhite{$\,\,0$}\TextOnBlack{}\\
\TextOnWhite{$ 0$}\TextOnBlack{}\TextOnWhite{$ 0$}\TextOnWhite{$ 0$}\TextOnWhite{$
0$}%
\TextOnBlack{}\TextOnBlack{}\TextOnWhite{$-x$}\TextOnWhite{$\,\,y$}\TextOnBlack{}\\
\TextOnWhite{$ 0$}\TextOnBlack{}\TextOnWhite{$ 0$}\TextOnWhite{$ 0$}\TextOnWhite{$
0$}%
\TextOnBlack{}\TextOnBlack{}\TextOnWhite{$\,\,0$}\TextOnWhite{$-x$}\TextOnBlack{}\\
\TextOnBlack{}\TextOnWhite{$ 0$}\TextOnBlack{}\TextOnBlack{}\TextOnBlack{}%
\TextOnWhite{$ 0$}\TextOnWhite{$ 0$}\TextOnBlack{}\TextOnBlack{}\TextOnWhite{$\,\,y$}\\
\TextOnBlack{}\TextOnWhite{$ 0$}\TextOnBlack{}\TextOnBlack{}\TextOnBlack{}%
\TextOnWhite{$ 0$}\TextOnWhite{$ 0$}\TextOnBlack{}\TextOnBlack{}\TextOnWhite{$-x$}\\
\TextOnWhite{$0$}\TextOnBlack{}\TextOnWhite{$0$}\TextOnWhite{$0$}\TextOnWhite{$0$}%
\TextOnBlack{}\TextOnBlack{}\TextOnWhite{$0$}\TextOnWhite{$0$}\TextOnBlack{}\\
\TextOnWhite{$0$}\TextOnBlack{}\TextOnWhite{$0$}\TextOnWhite{$0$}\TextOnWhite{$0$}%
\TextOnBlack{}\TextOnBlack{}\TextOnWhite{$0$}\TextOnWhite{$0$}\TextOnBlack{}\\
\TextOnBlack{}\TextOnWhite{$0$}\TextOnBlack{}\TextOnBlack{}\TextOnBlack{}%
\TextOnWhite{$0$}\TextOnWhite{$0$}\TextOnBlack{}\TextOnBlack{}\TextOnWhite{$0$}
}
\end{center}
\end{example}

We also have other well-known constructions like the Koszul complex construction
\cite{Avramov} giving
us examples as below.
\begin{example}

For $r=3$, $N=8$ and $\RowNumber  +\ColNumber=2$, define
\begin{displaymath}
\psi(x,y,z)=
 \left[
\begin{array}{rrrrrrrr}
0& x    &  y   & z      &  0   & 0   & 0   & 0 \\
0& 0    &  0   & 0      &   y  &-z   & 0   & 0 \\
0& 0    &  0   & 0      &  -x  & 0   & z   & 0 \\
0& 0    &  0   & 0      &   0  & x   &-y   & 0\\
0& 0    &  0   & 0      &   0  & 0   & 0   & z\\
0& 0    &  0   & 0      &   0  & 0   &0    & y\\
0& 0    &  0   & 0      &   0  & 0   &0    & x\\
0& 0    &  0   & 0      &   0  & 0   &0    & 0
\end{array}
 \right].
 \end{displaymath}
 In this example, $\sigma_\psi=(1,2)(3,5)(4,6)(7,8)$.
\end{example}
We end with a few questions for future research.
 Notice that all examples discussed above are in $\mathbf{DP}(N)$.
 Hence one can ask :
\begin{question}
Is $\mathbf{MP}(r,N)\subseteq \mathbf{DP}(N) $ ?
\end{question}
For all these examples $\frac{N}{2^{r-1}}$ is an integer.
 For instance, we can find an example see Example \ref{Ex12} for $r=3$, $N=12$ but
we do not know the answer of the following question:
\begin{question}\label{question2}
 Is there any example for $r=3$ and $N=10$? More precisely, can we say that
$\mathbf{MP}(3,10)$ is nonempty?
\end{question}

 Note that the following example can not be obtained by the constructions we
mentioned
above.
\begin{example}\label{Ex12}
For $r=3$, $N=12$, $d=(0,0,-1,\ldots,-1)$ and $\RowNumber  +\ColNumber=3$, consider

\begin{displaymath}
 \left[
\begin{array}{rrrrrrrrrrrr}
0& x    &  y^2 & yz     &  z^2    & 0    &  0   & 0   &  0   & 0   & 0 & 0 \\
0& 0    &  0   & 0      &  0      & y^2  &  yz  & 0   &  z^2 & 0   & 0 & 0 \\
0& 0    &  0   & 0      &  0      & -x    &  0   & -z   &  0   & 0   & 0 & 0 \\
0& 0    &  0   & 0      &   0     & 0    &  -x   & y   &  0   & z   & 0 & 0 \\
0& 0    &  0   & 0      &   0     & 0    &  0   & 0   &  -x   &- y   & 0 & 0 \\
0& 0    &  0   & 0      &   0     & 0    &  0   & 0   &  0   & 0   & -z & 0 \\
0& 0    &  0   & 0      &   0     & 0    &  0   & 0   &  0   & 0   & y & -z \\
0& 0    &  0   & 0      &   0     & 0    &  0   & 0   &  0   & 0   & x & 0 \\
0& 0    &  0   & 0      &   0     & 0    &  0   & 0   &  0   & 0   & 0 & y \\
0& 0    &  0   & 0      &   0     & 0    &  0   & 0   &  0   & 0   & 0 & -x \\
0& 0    &  0   & 0      &   0     & 0    &  0   & 0   &  0   & 0   & 0 & 0\\
0& 0    &  0   & 0      &   0     & 0    &  0   & 0   &  0   & 0   & 0 & 0\\
\end{array}
\right].
\end{displaymath}
\end{example}

Note that we can obtain an example for $r=3$ and
$N=4s$
for every $s\geq 2$ by using the examples for $r=3$, $N=8$ and the example for
$r=3$, $N=12$ and applying the chessboard construction as many times as necessary.
If the answer to question \ref{question2} is negative, then one can ask the
following question:
\begin{question}
Do there exist any periodicity results about nonemptiness of $\mathbf{MP}(r,N)$?
\end{question}

Another observation we make about these examples is that there always exists a
sequence of permutations $\sigma _1<\sigma _2 <\dots <\sigma _r$ such that the image
of the morphism contains a point from each Borel orbit corresponding to the these
$\sigma _i$'s and each pair of consecutive ${\sigma_i}$'s consist of distinct
transpositions. For example,
putting $x=1$ and $y=0$ to $\psi$ in Example \ref{Ex2},we get a point in the Borel
orbit corresponding to permutation $\sigma_2=(1,2)(3,4)$.
and putting $x=0$ and $y=1$, we get $\sigma_1=(1,3)(2,4)$. Hence one could ask the
following question:

\begin{question}
 Given $\sigma $ in $\mathbf{MP}(r,N)$ does there always exists a morphism
 $\psi: \mathbb{P}^{r-1}_k\rightarrow V(d)-L(d)$
with a sequence permutations $\sigma _1<\sigma _2 <\dots <\sigma _r$ and points
$X_1$, $X_2$, $\ldots$ ,$X_r$ in the image of $\psi $ such that $\sigma
_{\psi}=\sigma $ and $X_i$ is in the Borel orbit of $\sigma _i$ for all $i$ and
$\sigma _i$ and $\sigma_{i+1}$ has no common transpositions?
\end{question}
 If the answer is affirmative to this question then one can say that the inequalities
 $$\frac{n(n+1)}{2}\leq \dim(B_{{\sigma}_i})\leq n^2$$
 and
 $$\dim (B_{{\sigma}_i})+ {\left\lceil{ \frac{n}{2}}\right\rceil} \leq \dim (B_{{\sigma}_{i+1}})$$
 hold and they give the inequality $N\geq 2r$.

 Note that Allday and Puppe \cite{AlldayPuppe} have related results: If $k$, $A$,
$r$, $N$, and $M$ are as in Conjecture
\ref{carlssonsconjecture}, then they prove $N\geq 2r$. Moreover, Avramov,
Buchweitz and Iyengar \cite{Avramov} verified that $N\geq 2r$ in a more general case.

\bibliographystyle{plain}
\bibliography{newreferenceR2}
\end{document}